\documentclass{amsart}
\usepackage[utf8]{inputenc}
\usepackage[utf8]{inputenc}
\usepackage[utf8]{inputenc}
\usepackage[english]{babel}
\usepackage{amssymb}
\usepackage{amsfonts}
\usepackage{amsmath}
\usepackage{amscd}
\usepackage{amsthm}
\usepackage{geometry} 
\usepackage{latexsym}
\usepackage{epsfig}
\usepackage{graphicx}
\usepackage{mathtools}
\usepackage{xcolor}
\usepackage[all]{xypic}

\newtheorem{theorem}{Theorem} [section]

\newtheorem{definition}[theorem]{Definition}
\newtheorem{proposition}[theorem]{Proposition}
\newtheorem{lemma}[theorem]{Lemma}

\theoremstyle{definition}
\newtheorem{example}[theorem]{Example}
\newtheorem{remark}[theorem]{Remark}

\title[The HH condition and the AOP for surfaces]{The Harbourne-Hirschowitz condition and the anticanonical orthogonal property for surfaces}

\author{Abel Castorena}
\address{Abel Castorena. Centro de Ciencias Matem\'aticas. Universidad Nacional Aut\'onoma de M\'exico, Campus Morelia. Antigua Carretera a P\'atzcuaro 8701, Col. Ex-Hacienda San Jos\'e de la Huerta, C.P. 58089. Morelia, Mich., Mexico}
\email{abel@matmor.unam.mx}

\author{Juan Bosco Fr\'ias-Medina}
\address{Juan Bosco Fr\'ias Medina. Instituto de F\'isica y Matem\'aticas. Universidad Michoacana de San Nicol\'as de Hidalgo. Edificio C-3, Ciudad Universitaria. Avenida Francisco J. M\'ugica s/n, Colonia Felicitas del R\'io,
C.P. 58040,  Morelia, Mich., Mexico}
\email{juan.frias@umich.mx}

\date{}

\thanks{The first author is supported by Grants PAPIIT UNAM IN100419 ``Aspectos Geom\'etricos del moduli de curvas $M_g$" and CONACyT, M\'exico A1-S-9029 ``Moduli de curvas y curvatura en $A_g$". The second author acknowledges the support of Programa de Becas Posdoctorales 2019, DGAPA, UNAM during 2021. The second author was supported by ``Programa de Estancias Posdoctorales por M\'exico Convocatoria 2021 de CONACYT" during the revisions of this paper.}

\begin{document}

\begin{abstract}
In this paper we give the first steps toward the study of the Harbourne-Hirschowitz condition and the Anticanonical Orthogonal Property for regular surfaces. To do so, we consider the Kodaira dimension of the surfaces and study the cases based on the Enriques-Kodaira classification.
\end{abstract}

\maketitle

\textbf{Keywords:} Regular surfaces; Kodaira dimension; anticanonical class.

\section{Introduction}

In 1966, D. Mumford in his book \textit{Lectures on curves on an Algebraic Surface} \cite{Mum66} proposed four lines of study for curves on a general surface. He called ``The problem of Riemann-Roch'' to the first of these problems and stated the following: given a curve $C$ on a surface $S$, determine the dimension of the complete linear system of curves containing $C$. Mumford pointed out that this problem is equivalent to the problem of computing the dimension of the zeroth cohomology group associated with the line bundle $\mathcal{O}_S(C)$ on $S$. To do so, one can use the Riemann-Roch theorem: for the line bundle $\mathcal L:=\mathcal{O}_S(D)$ associated with a divisor $D$ on a surface $S$, we have that
\begin{equation*}
    h^0(S,\mathcal{O}_S(D))-h^1(S,\mathcal{O}_S(D))+h^2(S,\mathcal{O}_S(D))
    = p_a(S)+ 1 + \frac{1}{2}(D^2-K_S\cdot D),
\end{equation*}
here, for $i=0,1,2$, $h^i(S,\mathcal{O}_S(D))$ denotes the dimension of the $i$-th cohomology group $H^i(S,\mathcal{L})$, $p_a(S)$ denotes the arithmetic genus of $S$ and $K_S$ denotes a canonical divisor on $S$. So, to compute explicitly the dimension of $H^0(S,\mathcal{O}_S(D))$, one has to compute the dimension of the other cohomology groups and the arithmetic genus of the surface. 

It is well-known (see for example, \cite[Chapter VI]{BHPV}) that complex algebraic surfaces are classified by the Enriques-Kodaira classification depending on their Kodaira dimension. Indeed, if $S$ is a minimal algebraic surface and $\kappa(S)$ denotes its Kodaira dimension, then one of the following occurs:
\begin{enumerate}
    \item[a)] $\kappa(S)=-\infty$, so $S$ is a minimal rational surface or a ruled surface over a curve of positive genus (recall that the minimal surfaces of class VII are never algebraic).
    \item[b)] $\kappa(S)=0$, so $S$ is one of the following: an Enriques surface, a bielliptic surface, a Kodaira surface (primary or secondary), a $K3$ surface, or a tori.   
    \item[c)] $\kappa(S)=1$, so $S$ is a minimal properly elliptic fibration.
    \item[d)] $\kappa(S)=2$, so $S$ is a minimal surface of general type.
\end{enumerate}

In \cite{Har96}, B. Harbourne considered the following situation in the case of rational surfaces: let $S=S_n\to S_{n-1}\to\cdots\to S_1\to S_0=\mathbb{P}^2$ a composition of morphisms, where $S_i\to S_{i-1}$ is the blow-up at a point $p_i\in S_{i-1}$ for $i=1,\dots, n$. Note that the $p_i$'s could be infinitely near points. We say that the points $\{p_i\}_{i=1}^n$ are in \textit{good position} if the surface $S$ obtained by blow-ups the points $p_1,\dots,p_n$ has no irregular effective nef divisor classes, that is, if $D$ is an effective nef divisor on $S$, then $h^1(S,\mathcal{O}_S(D))=0$. Since for a rational surface $S$ we have that $p_a(S)=0$, the property of having points in good position implies that the dimension of $H^0(S,\mathcal{O}_S(D))$ can be computed for any divisor class on $S$ (see for example \cite[Theorem~3]{FL20}) and so, one can solve the problem of Riemann-Roch in this case. Nowadays, it is conjectured that this fact hold if $S$ is a blow-up of $\mathbb{P}^2$ at ``sufficiently general points". Such conjecture was stated in equivalent forms by Harbourne in \cite{Har86}, A. Hirschowitz in \cite{Hir89}, B. Segre in \cite{Seg61} and A. Gimigliano in \cite{Gim89}, and it is known as the \textit{Harbourne-Hirschowitz conjecture} or \textit{Segre-Harbourne-Gimigliano-Hirschowitz conjecture}.  

Another important problem involving the first cohomology group appears in the context of the deformation theory of Hilbert schemes of divisors on surfaces. For an effective divisor $D_0$ on a surface $S$, we denote by $H_{D_0,S}$ the Hilbert scheme of all effective divisors on $S$  that are algebraically equivalent to $D_0$. The obstruction space to the deformation theory $H_{D_0,S}$ is induced by the long exact sequence in cohomology associated with the sequence
\begin{equation*}
 0\to \mathcal O_S\to \mathcal O_S(D_0) \to\mathcal O_{D_0}(D_0)\to 0. 
\end{equation*}
More precisely, the obstruction space of the functor $H_{D_0,S}$ is given by 
\begin{equation*}
    K_1= \text{Im}\big( H^1(S,\mathcal{O}_S(D_0)) \rightarrow H^1(D_0,\mathcal{O}_{D_0}(D_0)) \big).
\end{equation*}
The divisor $D_0$ is \textit{semiregular} if $K_1=0$. In such case, we have that $H_{D_0,S}$ is scheme-theoretically smooth at $D_0$ of dimension equal to $h^0(D_0,\mathcal{O}_{D_0}(D_0))$. See \cite{FM99} for all the details in this construction. In addition, in the context of \textit{rigid divisors} \cite{HP20} it is relevant the study of some first cohomology groups. 

Motivated by the concept of points in good position, the second author along with M. Lahyane introduced in \cite{FL18} a natural generalization of this notion for any surface: 

\begin{definition}
A smooth projective surface $S$ is a \textbf{Harbourne-Hirschowitz surface} (HH surface for short) if for every effective nef divisor $D$ on $S$ we have  that $h^1(S,\mathcal O_S(D))=0$.
\end{definition} 

We will also refer to the requirement of the above definition as the \textit{Harbourne-Hirschowitz condition}. Note that in the case of dealing with a Harbourne-Hirschowitz surface $S$, we always have the semiregular condition for every effective nef divisor on $S$.

Along with the concept of Harbourne-Hirschowitz surfaces, in the same work \cite{FL18} the following notion was introduced:

\begin{definition}
A smooth projective surface $S$ satisfies the \textbf{Anticanonical Orthogonal Property} (AOP for short) if for every nef divisor $D$ on $S$, the equality $-K_S\cdot D=0$ implies that $D=0$.
\end{definition}

Similarly, we refer to the requirement of this definition as the \textit{AOP condition}. This concept was introduced in general but was used in the context of anticanonical rational surfaces, indeed, the interest of having an anticanonical rational surface $S$ satisfying the AOP condition is that such surface will be a Harbourne-Hirschowitz surface (\cite[Theorem~2.5]{FL18}).

The aim of this work is to give the first steps toward the study of the Harbourne-Hirschowitz surface and the Anticanonical Orthogonal Property based on the Enriques-Kodaira classification. We restrict ourselves to the case of regular surfaces, this will be justified in the next section. The paper is organized as follows. In Section~\ref{hhaop} we review some properties in the general context of the Harbourne-Hirschowitz surfaces and the surfaces that satisfy the Anticanonical Orthogonal Property. In Sections~\ref{dim2}, \ref{dim1} and \ref{dim0} we study these properties for surfaces whose Kodaira dimension is equal to $2$, $1$ and $0$ respectively. Finally, in Section~\ref{rational} we study the case of surfaces of Kodaira dimension $-\infty$, and more precisely, the rational surfaces. This case is where the Harbourne-Hirschowitz surfaces and Anticanonical Ortogonal Property are more natural and  interesting to study.

\section{Harbourne–Hirschowitz surfaces and the anticanonical orthogonal property}\label{hhaop}

In this section we review some properties of Harbourne-Hirschowitz surfaces and surfaces which satisfies the Anticanonical Orthogonal Property. Throughout this paper will work over the field of complex numbers $\mathbb{C}$ and we will assume that all the surfaces are smooth.

Let $S$ be a HH surface and let $D$ be any effective nef divisor on $S$. Consider the following exact sequence:
\begin{equation*}
 0\to \mathcal O_S(-D)\to \mathcal O_S\to\mathcal O_D\to 0.   
\end{equation*}
Since $h^1(S,\mathcal O_S(D))=0=h^1(S,\mathcal{O}_S(K_S-D))$, from cohomology one has the following sequence:
\begin{equation*}
 0\to H^0(S,\mathcal{O}_S(K_S-D))\to H^0(S,\mathcal{O}_S(K_S))\to H^0(D,\mathcal{O}_D(K_S))\to 0.   
\end{equation*}
\noindent Since $p_g(S)=h^2(S,\mathcal O_S)=h^0(S,\mathcal{O}_S(K_S))$, then \begin{equation*}
  p_g(S)=h^0(S,\mathcal{O}_S(K_S-D))+h^0(D,\mathcal{O}_D(K_S)).  
\end{equation*}
So, on a Harbourne-Hirschowitz surface we have  that the geometric genus $p_g(S)$ can be written as the sum of the dimensions of the cohomology groups  
$H^0(S,\mathcal{O}_S(K_S-D))$ and $H^0(D,\mathcal{O}_D(K_S))$.

\begin{remark}\label{remark-hhregular}
If $S$ is a HH surface, then $S$ is a regular surface. Indeed, since the zero divisor $D=0$ is nef and effective, the hypothesis of being a HH surface implies that $h^1(S,\mathcal{O}_S(D))=h^1(S,\mathcal{O}_S)=0$. Therefore, $S$ is regular.
\end{remark}

Note that both HH and AOP conditions cannot be ensured by blow-up a surface which satisfy these properties. For example, consider a rational surface $S$ with $K_S^2=1$, this surface satisfies the HH and AOP conditions (see Proposition~\ref{k2pos} below). If we blow-up such surface at a point, we will obtain a rational surface $S'$ with $K_{S'}^2=0$ and in general, we may lose the fulfillment of the HH and AOP conditions (see Theorem~\ref{k2zero} below). However, below we prove that both conditions could be preserved under certain morphisms if the domain satisfies the properties.

\begin{proposition}\label{contrachh}
Let $\pi:S\rightarrow T$ be the blow-up of $T$ at a point. If $S$ is a HH surface, then $T$ also is a HH surface. In particular, the result holds if $\pi$ is a birational morphism. 
\end{proposition}
\begin{proof}
Let $D$ be an effective nef divisor on $T$. The induced application $\pi^*:\mathrm{Pic}(T)\rightarrow\mathrm{Pic}(S)$ preserves the effectiveness and nefness of a divisor, then we have that $D'=\pi^*(D)$ is an effective nef divisor on $S$. Since $S$ is a HH surface, then $h^1(S,\mathcal{O}_S(D'))=0$.

On the other hand, since the dimension of the cohomology groups are preserved under $\pi^*$ (see for example \cite[Lemma 3 (b)]{Har96}), we have the equality $h^1(T,\mathcal{O}_T(D))=h^1(S, \mathcal{O}_S(D'))=0$. Therefore, $T$ is a HH surface.
\end{proof}

\begin{proposition}
Let $\pi:S\rightarrow T$ be a dominant morphism such that contracts the ramification divisor to points. If $S$ satisfies the AOP condition, then $T$ also satisfies the AOP condition. 
\end{proposition}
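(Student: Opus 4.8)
The plan is to use the pullback $\pi^*$ to transport a would-be counterexample on $T$ up to $S$, apply the AOP hypothesis there, and then descend the resulting triviality back to $T$. First I would note that a dominant morphism between surfaces is automatically generically finite, of some degree $d=\deg(\pi)\geq 1$, and that the ramification formula yields $K_S=\pi^*K_T+R$ where $R\geq 0$ is the ramification divisor. The hypothesis that $\pi$ contracts $R$ to points says exactly that every component of $R$ maps to a point, i.e. $\pi_*R=0$; this is the fact that will make the ramification term disappear.

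Next, let $D$ be any nef divisor on $T$ with $-K_T\cdot D=0$, and aim to prove $D=0$. I would first check that $\pi^*D$ is nef on $S$: for any curve $C\subset S$ the projection formula gives $\pi^*D\cdot C=D\cdot\pi_*C\geq 0$, since $\pi_*C$ is an effective curve or zero and $D$ is nef. The central computation is then
\begin{equation*}
 -K_S\cdot\pi^*D=-(\pi^*K_T+R)\cdot\pi^*D=-\pi^*K_T\cdot\pi^*D-R\cdot\pi^*D.
\end{equation*}
Using the projection formula $\pi^*K_T\cdot\pi^*D=d\,(K_T\cdot D)$, the first term equals $d\,(-K_T\cdot D)=0$; and the second term equals $\pi_*R\cdot D=0$ because $R$ is contracted. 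Hence $\pi^*D$ is a nef divisor on $S$ with $-K_S\cdot\pi^*D=0$.

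Applying the AOP condition on $S$ now forces $\pi^*D=0$. To descend this to $T$ I would use that $\pi$, being dominant and generically finite, has injective pullback on numerical classes: by the projection formula, for every divisor $E$ on $T$ one has $d\,(D\cdot E)=\pi^*D\cdot\pi^*E=0$, so $D\cdot E=0$ for all $E$, whence $D=0$. This gives the AOP for $T$ and completes the argument.

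I expect the only delicate points to be the precise justification of the identity $K_S=\pi^*K_T+R$ for a generically finite (not necessarily finite) morphism, and the translation of the phrase ``contracts the ramification divisor to points'' into the clean statement $\pi_*R=0$ that kills $R\cdot\pi^*D$. The blow-up of a single point, where $R$ is the exceptional curve contracted to that point, is the guiding example and a good sanity check; everything else is a formal consequence of the projection formula and the definition of AOP.
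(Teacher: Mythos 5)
Your strategy is the same as the paper's: pull $D$ back to $S$, use $K_S=\pi^*K_T+R$ together with the projection formula and $\pi_*R=0$ to get $-K_S\cdot\pi^*D=0$, invoke the AOP on $S$ to conclude $\pi^*D=0$, and then descend to $T$. Up to the descent, your argument is correct (and in fact more careful than the paper's computation about the degree factor $d$). The problem is the very last inference: from $\pi^*D=0$ you deduce, via $d\,(D\cdot E)=\pi^*D\cdot\pi^*E=0$, that $D\cdot E=0$ for every divisor $E$ on $T$, and then write ``whence $D=0$''. That step is invalid: $D\cdot E=0$ for all $E$ only says that $D$ is numerically trivial, and a numerically trivial (hence automatically nef) divisor need not be zero. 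This distinction is not pedantic in this paper: the AOP is defined by the requirement $D=0$, not $D\equiv 0$, and the authors exploit exactly this gap elsewhere --- an Enriques surface fails the AOP because its canonical divisor is numerically trivial (torsion) yet non-trivial, and the proposition for surfaces of general type needs the ``no torsion'' hypothesis precisely to pass from numerical triviality to $D=0$. Since the target $T$ here is an arbitrary surface, nothing rules out non-zero numerically trivial nef divisors on it, so your argument as written only proves that $D$ is numerically trivial.

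The repair is immediate and is what the paper does: you already know $\pi^*D=0$ as a divisor, and for a dominant generically finite morphism the pullback of divisors is injective. Indeed, if $D=\sum_i a_i\Gamma_i$ with distinct prime divisors $\Gamma_i$, then $\pi^*\Gamma_i$ contains the components of $\pi^{-1}(\Gamma_i)$ dominating $\Gamma_i$ with positive coefficients, and these components are distinct for distinct $\Gamma_i$, so $\pi^*D=0$ forces all $a_i=0$. Replacing your numerical-triviality descent by this injectivity argument closes the gap; everything else in your proposal matches the paper's proof.
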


\begin{proof}
Denote by $R$ the ramification divisor associated with $\pi$. Let $D$ be a nef divisor on $T$ such that $-K_T\cdot D=0$. Note that $\pi^*(D)$ is a nef divisor on $S$. Using the fact that $-K_S=\pi^*(-K_T)-R$,  the projection formula and the hypothesis that the morphism $\pi$ contracts $R$ to points, we have that
\begin{equation*}
    -K_S\cdot \pi^*(D)
    =\pi_*(-K_S)\cdot D
    = \pi_*\big(\pi^*(-K_T)-R\big)\cdot D
    =-K_T\cdot D + \pi_*(R)\cdot D 
    =0.
\end{equation*}
The hypothesis implies that $\pi^*(D)=0$ and since $\pi^*$ is an injective application, we conclude that $D=0$.
\end{proof}

\begin{remark}
If $\pi:S\rightarrow T$ is the blow-up of $T$ at a point or more general, a birational morphism, and if $S$ satisfies the AOP condition, then $T$ also satisfies the AOP condition. This result was previously proved in \cite{FL20}.
\end{remark}

\begin{remark}
The adjuction formula also gives a criterion that ensures when a surface $S$ does not satisfy the AOP condition: if there exists a nonsingular curve such that $C^2=0$ and $p_a(C)=1$, then $S$ does not satisfy AOP.
\end{remark}

Note that Remark \ref{remark-hhregular} tells that regular surfaces are the correct ones to study the HH condition and \textit{a priori} the regularity of the surface is not related with the AOP condition. The original motivation of this study came from the study of rational surfaces, they are always regular surfaces and we have the property that being numerically trivial implies that we are dealing with the zero divisor. In particular, such property enables one to study the AOP condition using divisor classes on the N\'eron-Severi group instead of study directly the divisors. In order to follow this original motivation, we restrict ourselves to the case of regular surfaces although the regularity of a surface does not imply that the numerical and linear equivalences coincide.

\section{Kodaira dimension $2$}\label{dim2}

Let $S$ be a minimal surface of general type. One has that $K_S$ is nef and $K^2_S\geq 1$. From Proposition 1 in \cite{Bo} we have that if $C$ is an irreducible curve on $S$ then $K_S\cdot C\geq 0$, and if $K_S\cdot C=0$, then $C^2=-2$ and $C$ is smooth and rational. Moreover the set of curves $E$ with $K_S\cdot E=0$ form a finite set and they are numerically independents on $S$. From this result we have in particular that if $S$ is regular and has no torsion, then it satisfies the AOP condition. 

\begin{proposition}
Let $S$ be a minimal surface of general type with $q=0$. If $S$ has no torsion, then $S$ satisfies the AOP condition.
\end{proposition}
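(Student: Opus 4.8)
The plan is to apply the Hodge Index Theorem to trap the self-intersection $D^2$, and to invoke the hypotheses $q=0$ and torsion-freeness only at the last step, in order to pass from numerical triviality to the actual vanishing of the divisor class. So let $D$ be a nef divisor on $S$ with $-K_S\cdot D=0$, equivalently $K_S\cdot D=0$; the goal is to conclude $D=0$. Recall that since $S$ is minimal of general type, $K_S$ is nef with $K_S^2\geq 1>0$, so we have at our disposal a class of positive self-intersection.

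First I would bound $D^2$ from both sides. On one hand, $D$ is nef, hence $D^2\geq 0$. On the other hand, the Hodge Index Theorem asserts that the intersection form is negative definite on the orthogonal complement of any class of positive self-intersection; applying this to $K_S$ and using $K_S\cdot D=0$ gives $D^2\leq 0$, with equality if and only if $D$ is numerically trivial. Combining the two inequalities forces $D^2=0$, and hence $D\equiv 0$ numerically.

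It then remains to upgrade $D\equiv 0$ to $D=0$, and this is precisely where the two hypotheses enter. Since $q=h^1(S,\mathcal{O}_S)=0$ we have $\mathrm{Pic}^0(S)=0$, so $\mathrm{Pic}(S)=\mathrm{NS}(S)$; the assumption that $S$ has no torsion says that $\mathrm{NS}(S)$ is torsion-free, and therefore numerical and linear equivalence coincide on $S$. Thus $D\equiv 0$ yields $D=0$ in $\mathrm{Pic}(S)$, which is exactly the AOP condition. The genuinely substantive input is the Hodge Index step, available only because the general-type assumption supplies $K_S^2>0$; the step most in need of care is the final identification of numerical with linear equivalence, since for an arbitrary regular surface these need not agree, and it is precisely the torsion-free hypothesis that closes this gap. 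Note that the finiteness and numerical independence of the $(-2)$-curves $E$ with $K_S\cdot E=0$ recorded above are not strictly needed for this argument, as the Hodge Index inequality already controls every nef class orthogonal to $K_S$.
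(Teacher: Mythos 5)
Your proof is correct, but it takes a more direct route than the paper's. Where you apply the Hodge Index Theorem to $K_S$ itself --- using that minimality and general type give $K_S^2\geq 1>0$, so that $K_S\cdot D=0$ and $D^2\geq 0$ immediately force $D\equiv 0$ --- the paper instead decomposes a pluricanonical system $|mK_S|=|H|+|F|$ into its mobile and fixed parts, shows $D\cdot H=0$ by squeezing ($D\cdot H=-D\cdot F\leq 0$ from nefness of $D$ and effectivity of $F$, while $D\cdot H\geq 0$ since $H$ is effective), and then applies Hodge Index to $H$, which satisfies $H^2>0$ for $m$ large because $S$ is of general type. Both arguments conclude identically: numerical triviality plus $q=0$ and absence of torsion in $\mathrm{Pic}(S)$ give $D=0$ (your write-up of this last step is actually more explicit than the paper's, since you spell out where $q=0$ enters via $\mathrm{Pic}^0(S)=0$). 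The trade-off is worth noting: your argument is shorter but genuinely uses minimality, since without it $K_S^2$ may fail to be positive; the paper's mobile-part argument never touches $K_S^2$ and only needs $H^2>0$ for large $m$, which holds for any surface of general type, so it would survive dropping the minimality hypothesis. Given that the statement assumes $S$ minimal, your simplification is legitimate and loses nothing.
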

\begin{proof}
Let $D$ be a nef divisor on $S$ such that $K_S\cdot D=0$. Note that $D^2\geq 0$ and that $|mK_S|\neq\emptyset$ for a sufficiently large $m$. Consider the decomposition of $|mK_S|$ in its mobile part $|H|$ and its fixed part $|F|$:
\begin{equation*}
    |mK_S|=|H|+|F|.
\end{equation*}
Using the fact that $S$ is a surface of general type we have that $H^2>0$ for sufficiently large $m$. On the other hand, the hypothesis $D$ nef implies
\begin{equation*}
    D\cdot H= D\cdot (mK_S-F)=mK_S\cdot D-D\cdot F=-D\cdot F\leq 0.
\end{equation*}
Thus, $D\cdot H\geq 0$ and we have that $D\cdot H=0$. Since $H^2>0$ and $D\cdot H=0$, Hodge Index Theorem implies $D^2\leq 0$ and since $D^2\geq 0$, it follows that $D$ is numerically trivial. The condition that $S$ has no torsion implies that $\mathrm{Pic}(S)$ has no torsion. Then, since the torsion subgroup consists of numerically trivial classes, we have that the only numerically trivial class is the trivial one. Therefore, we conclude that $D$ is the zero divisor.
\end{proof}

The study of surfaces of general type is an active field of study nowadays and we are not aware of an example of a regular surface of general type that satisfies the HH condition. We will study concrete cases in a future work. 

On the other hand, recall that one of the importance of the HH condition is to compute explicitly the dimension of the complete linear systems on a surface. In the case of surfaces of general type with $q=0$, the vanishing of the first cohomology group may be not enough to make such computation. By Serre duality we have that $h^2(S,\mathcal{O}_S(D))=h^0(S,\mathcal{O}_S(K_S-D))$. At the same time, since $K_S$ is a non-zero nef divisor, it may occur that the latter cohomology group does not vanish, for example, there exists a minimal surfaces of general type $S$ with $p_g=3$, $q=0$ and such that $|K_S|$ has a non-trivial fixed part (see \cite[Theorem~4]{Bin21}).


\section{Kodaira dimension $1$}\label{dim1}


Recall that if $S$ is a surface whose Kodaira dimension is equal to 1, then the canonical divisor $K_S$ is nef and $K_S^2=0$. Since there exists a large enough $n$ such that $nK_S$ is non-trivial, an immediate consequence of this is the following:

\begin{theorem}
Let $S$ be a surface whose Kodaira dimension is equal to one. Then, $S$ does not satisfy the AOP condition.
\end{theorem}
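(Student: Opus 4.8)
The plan is to disprove the AOP condition directly, by exhibiting a single non-zero nef divisor $D$ on $S$ satisfying $-K_S\cdot D=0$. Recall that the AOP condition requires every such divisor to be the zero divisor, so producing one counterexample is enough to conclude that $S$ fails the property. Since the difficulty of the AOP condition always lies with divisors orthogonal to $-K_S$, and here $K_S$ itself is orthogonal to $K_S$ because $K_S^2=0$, the canonical class is the obvious place to look for a counterexample.

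Concretely, I would take a pluricanonical divisor. The stated properties for $\kappa(S)=1$ give that $K_S$ is nef with $K_S^2=0$, and that $nK_S$ is non-trivial for $n$ sufficiently large. Fixing such an $n$ and setting $D=nK_S$, I would verify the three needed facts: $D$ is nef, being a positive multiple of the nef divisor $K_S$; $D$ is non-zero by the choice of $n$; and the intersection number vanishes, since
\[
-K_S\cdot D=-n(K_S\cdot K_S)=-nK_S^2=0.
\]
Hence $D$ is a non-zero nef divisor with $-K_S\cdot D=0$, in direct contradiction with the AOP condition, and therefore $S$ does not satisfy AOP.

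I do not expect any serious obstacle: the whole argument rests on the characterization of Kodaira dimension one, which guarantees precisely that some positive multiple of $K_S$ is non-trivial, together with the vanishing $K_S^2=0$. The single point deserving a word of care is that the AOP condition is phrased for genuine divisors rather than numerical classes; but nefness and the product $-K_S\cdot D$ depend only on the numerical class, while the non-triviality of $nK_S$ ensures that $D$ is not the zero divisor, so all three requirements hold simultaneously for the actual divisor $D=nK_S$.
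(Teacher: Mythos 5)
Your proposal is correct and is precisely the argument the paper intends: the paper states the theorem as an ``immediate consequence'' of $K_S$ being nef, $K_S^2=0$, and $nK_S$ being non-trivial for large $n$, and your choice of $D=nK_S$ spells out exactly that consequence. No gaps; this matches the paper's (implicit) proof.
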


On the other hand, since every surface whose Kodaira dimension is equal to 1 is an elliptic surface, we prove that the HH condition does not hold.

\begin{theorem}
Let $S$ be a regular surface whose Kodaira dimension is equal to one. Then, $S$ does not satisfy the HH condition.
\end{theorem}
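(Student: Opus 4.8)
The plan is to exploit the elliptic fibration carried by $S$ and exhibit an explicit effective nef divisor whose first cohomology does not vanish. Since $\kappa(S)=1$, the surface $S$ is elliptic: there is a fibration $f\colon S\to B$ onto a smooth curve $B$ with connected fibres whose general fibre $F$ is a smooth curve of genus one. Because $S$ is regular, the Leray inclusion $H^1(B,\mathcal O_B)\hookrightarrow H^1(S,\mathcal O_S)=0$ (coming from $f_*\mathcal O_S=\mathcal O_B$) forces $g(B)=0$, so $B\cong\mathbb P^1$. I will test the HH condition on the divisor $D=nF$ for a general fibre $F=f^{-1}(p)$ and a suitable integer $n$.

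First I would record the numerical data. We have $F^2=0$, and by adjunction on the genus-one curve $F$, $K_S\cdot F = 2p_a(F)-2-F^2 = 0$. Hence $nF$ is effective, and it is nef since $F\cdot C\ge 0$ for every irreducible curve $C$ (the intersection is $0$ on vertical curves and positive on horizontal ones). Moreover $\mathcal O_S(nF)\cong f^*\mathcal O_{\mathbb P^1}(n)$, so by the projection formula together with $f_*\mathcal O_S=\mathcal O_B$ one gets $h^0(S,\mathcal O_S(nF)) = h^0(\mathbb P^1,\mathcal O(n)) = n+1$. Feeding $(nF)^2=0$ and $K_S\cdot nF=0$ into Riemann--Roch gives $\chi(\mathcal O_S(nF)) = \chi(\mathcal O_S) = 1+p_g$, where the last equality uses $q=0$. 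By Serre duality $h^2(S,\mathcal O_S(nF))=h^0(S,\mathcal O_S(K_S-nF))$, which vanishes for large $n$: if $H$ is ample then $(K_S-nF)\cdot H = K_S\cdot H - n(F\cdot H)<0$ once $n>(K_S\cdot H)/(F\cdot H)$, so $K_S-nF$ cannot be effective. Combining these, for all sufficiently large $n$,
\begin{equation*}
 h^1(S,\mathcal O_S(nF)) = h^0(S,\mathcal O_S(nF)) + h^2(S,\mathcal O_S(nF)) - \chi(\mathcal O_S(nF)) = (n+1)-(1+p_g) = n-p_g .
\end{equation*}
Choosing any $n>p_g$ (for instance $n=p_g+1$) then yields an effective nef divisor $D=nF$ with $h^1(S,\mathcal O_S(D))=n-p_g>0$, so $S$ fails the HH condition.

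The main subtlety worth flagging is that the naive choice $D=F$ is \emph{not} enough: the same computation gives $h^1(S,\mathcal O_S(F))=\max(0,1-p_g)$, which vanishes as soon as $p_g\ge 1$. The point is therefore to pass to a high multiple $nF$, where the deficiency $n-p_g$ eventually becomes positive; this is what makes the obstruction visible regardless of $p_g$. The remaining inputs are structural rather than computational, namely that a Kodaira-dimension-one surface is elliptic (so $f$ exists) and that regularity pins the base down to $\mathbb P^1$.

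If one prefers to avoid any non-minimality issues, one may first reduce to the minimal model $\sigma\colon S\to S_{\min}$: both regularity and $\kappa$ are preserved, and $\sigma$ is a composition of blow-ups, so by Proposition~\ref{contrachh} the HH property of $S$ would descend to $S_{\min}$. Hence it suffices to establish the statement for the relatively minimal elliptic surface $S_{\min}$, where $K_S$ is nef with $K_S^2=0$ and the fibre computations above are at their cleanest.
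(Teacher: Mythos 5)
Your proof is correct, but it takes a genuinely different route from the paper's. The paper's own proof is citation-based: it appeals to Friedman--Morgan's characterization of the divisors with vanishing first cohomology on a regular elliptic surface (such divisors must in particular satisfy $D\cdot K_S=0$ and a numerical proportionality to $K_S$), and concludes that the HH condition fails because effective nef divisors are not in general of that restricted form. You instead exhibit an explicit irregular effective nef divisor: taking the elliptic fibration $f\colon S\to\mathbb{P}^1$ (with regularity forcing the rational base via Leray), you compute $h^0(S,\mathcal{O}_S(nF))=n+1$ by the projection formula, kill $h^2$ by Serre duality for $n\gg 0$, and extract $h^1(S,\mathcal{O}_S(nF))=n-p_g>0$ from Riemann--Roch. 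In effect you have reproved by hand the case $b_i=0$ of the Friedman--Morgan formula that the paper records separately as Theorem~\ref{cohomellip}; your route is longer but self-contained and produces a concrete witness to the failure of HH, whereas the paper's is shorter but leans entirely on the literature (and is rather terse about why some effective nef divisor actually violates the cited condition). Two small points: your own computation establishes $h^1(S,\mathcal{O}_S(nF))=n-p_g$ only for $n$ large enough that $K_S-nF$ is non-effective, so the parenthetical choice $n=p_g+1$ is not justified as written; simply take $n$ sufficiently large with $n>p_g$ (or invoke Theorem~\ref{cohomellip} for the sharp range $n>p_g$). Also, the closing reduction to the minimal model via Proposition~\ref{contrachh} is sound but unnecessary, since your fibre computation is valid on any, not necessarily minimal, regular elliptic surface.
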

\begin{proof}
In \cite[Section 4]{FM99}, Friedman and Morgan characterized the divisors $D$ with the property $h^1(S,\mathcal{O}_S(D))=0$ in the case of a regular elliptic surface. Such divisors are numerically equivalent to $\frac{1-r}{2}K_S$ for some rational number $r\leq 1$. In particular, the condition $D\cdot K_S=0$ should be satisfied. Since the latter conditions is not satisfied in general, we concluded that the HH condition does not hold.
\end{proof}

In spite of the above results, Friedman and Morgan computed in \cite[Lemma 4.1]{FM99} the dimensions of the cohomology groups for regular elliptic surfaces. In fact, one can note that the dimension of the first cohomology group can be different from zero.

\begin{theorem}[Friedman-Morgan]\label{cohomellip}
Let $S$ be a regular elliptic surface and let $D=af+\sum_{i} b_i F_i$ where $a\geq 0$ and $0\leq b_i\leq m_i-1$. Here, $f$ is the divisor class of a general fiber, the $F_i$'s denote the multiple fibers and the $m_i$ denotes the multiplicity of $F_i$. Then
\begin{align*}
  &  h^0(S,\mathcal{O}_S(D))=a+1; & &\\
    & h^1(S,\mathcal{O}_S(D))=\begin{cases}
    0 & \text{ if } a\leq p_g \\
    a-p_g & \text{ if } a>p_g
    \end{cases}; 
    &  h^2(S,\mathcal{O}_S(D))=\begin{cases}
    p_g-a & \text{ if } a\leq p_g \\
    0 & \text{ if } a>p_g
    \end{cases}. &
\end{align*}
\end{theorem}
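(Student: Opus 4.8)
The plan is to push everything down to the base of the elliptic fibration and compute there. Write $\pi\colon S\to C$ for the relatively minimal elliptic fibration, so that the general fibre class is $f=\pi^{*}(p)$ for a point $p\in C$ and the multiple fibres satisfy $m_iF_i=\pi^{*}(t_i)$ for the corresponding points $t_i\in C$. Since $S$ is regular, $q=0$, and the low-degree terms of the Leray spectral sequence for $\mathcal O_S$ give an injection $H^1(C,\mathcal O_C)\hookrightarrow H^1(S,\mathcal O_S)$; hence $g(C)=0$, that is $C\cong\mathbb P^1$, and $\chi(\mathcal O_S)=1-q+p_g=1+p_g$. The structural inputs I would use are the canonical bundle formula $K_S=\pi^{*}M+\sum_j(m_j-1)F_j$, where $M$ is a line bundle on $\mathbb P^1$ of degree $\deg\omega_C+\chi(\mathcal O_S)=p_g-1$, together with the identity $h^0(S,\mathcal F)=h^0(C,\pi_{*}\mathcal F)$ and the projection formula.

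The computational core is the push-forward statement: for integers $0\le c_j\le m_j-1$ one has $\pi_{*}\mathcal O_S(\sum_j c_jF_j)=\mathcal O_C$. I would prove it locally over each $t_j$ by induction on $c_j$, using the exact sequences $0\to\mathcal O_S((c-1)F_j)\to\mathcal O_S(cF_j)\to\mathcal O_{F_j}(cF_j)\to 0$ and the fact that $\mathcal O_{F_j}(F_j)$ has exact order $m_j$ in $\mathrm{Pic}^0(F_j)$; thus for $1\le c\le m_j-1$ the class $\mathcal O_{F_j}(cF_j)$ is a non-trivial degree-zero line bundle on the elliptic curve $F_j$ and has no global sections, so the push-forward does not grow as $c$ runs from $0$ to $m_j-1$. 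Granting this, the projection formula gives $\pi_{*}\mathcal O_S(D)=\mathcal O_{\mathbb P^1}(a)\otimes\pi_{*}\mathcal O_S(\sum_i b_iF_i)=\mathcal O_{\mathbb P^1}(a)$, so that $h^0(S,\mathcal O_S(D))=h^0(\mathbb P^1,\mathcal O(a))=a+1$.

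For $h^2$ I would invoke Serre duality, $h^2(S,\mathcal O_S(D))=h^0(S,\mathcal O_S(K_S-D))$, and rewrite $K_S-D=\pi^{*}(M(-a))+\sum_j(m_j-1-b_j)F_j$; this is again of admissible shape because $0\le m_j-1-b_j\le m_j-1$. The same push-forward lemma yields $\pi_{*}\mathcal O_S(K_S-D)=M(-a)$, a line bundle of degree $p_g-1-a$ on $\mathbb P^1$, whence $h^2(S,\mathcal O_S(D))=h^0(\mathbb P^1,\mathcal O(p_g-1-a))$, equal to $p_g-a$ for $a\le p_g$ and $0$ for $a>p_g$ (the two descriptions agree at $a=p_g$, where both give $0$). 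Finally, since all fibre classes have vanishing mutual intersections and $K_S\cdot f=K_S\cdot F_i=0$, we obtain $D^2=0$ and $K_S\cdot D=0$, so Riemann--Roch gives $\chi(\mathcal O_S(D))=\chi(\mathcal O_S)=1+p_g$; then $h^1=h^0+h^2-\chi$ recovers $h^1(S,\mathcal O_S(D))=0$ for $a\le p_g$ and $a-p_g$ for $a>p_g$.

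The main obstacle is the push-forward lemma, i.e. controlling $\mathcal O_S(cF_j)$ near a multiple fibre; everything else is bookkeeping with the canonical bundle formula and Riemann--Roch. The decisive point is the exact torsion order $m_j$ of $\mathcal O_{F_j}(F_j)$, which is precisely what makes the sections stop growing at $c=m_j-1$, so that the hypothesis $b_i\le m_i-1$ is exactly what the clean formulas require; without it the push-forward would jump and $h^0$ would cease to equal $a+1$.
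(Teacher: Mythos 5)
Your argument is correct in substance, but note that the paper itself offers no proof of this statement: it is imported verbatim as Lemma 4.1 of Friedman--Morgan \cite{FM99}, so the only comparison available is between your reconstruction and the standard argument of that source. Your route --- computing $h^0$ by pushing forward to $\mathbb{P}^1$ via the lemma $\pi_*\mathcal{O}_S\bigl(\sum_j c_jF_j\bigr)=\mathcal{O}_{\mathbb{P}^1}$ for $0\le c_j\le m_j-1$, getting $h^2$ by Serre duality together with the canonical bundle formula, and then recovering $h^1$ from Riemann--Roch using $D^2=K_S\cdot D=0$ --- is the standard one and is essentially how the result is proved in the literature; it has the virtue of never needing $R^1\pi_*\mathcal{O}_S(D)$ or a Leray spectral sequence for $\mathcal{O}_S(D)$ itself, only the elementary low-degree injection for $\mathcal{O}_S$ that forces the base to be $\mathbb{P}^1$. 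Two small points deserve more care than you give them. First, a multiple fiber $F_j$ of a relatively minimal elliptic fibration need not be a smooth elliptic curve: it can be a cycle of rational curves (a multiple fiber of type $I_k$, $k\ge 1$). The two facts your push-forward lemma rests on --- that $\mathcal{O}_{F_j}(F_j)$ has order exactly $m_j$ in $\mathrm{Pic}(F_j)$ (Kodaira; see \cite[Ch.~III]{BHPV}), and that a non-trivial numerically trivial line bundle on such a fiber (an indecomposable curve of canonical type) has no global sections --- do hold in this generality, but your phrase ``non-trivial degree-zero line bundle on the elliptic curve $F_j$'' silently assumes smoothness, and for the $I_k$ case the vanishing of $h^0$ needs the indecomposability argument, not just ``degree zero on an elliptic curve.'' Second, the canonical bundle formula with $\deg M=p_g-1$ and the identification $m_jF_j=\pi^*(t_j)$ require the fibration to be relatively minimal; this is implicit in the statement (and in \cite{FM99}) but should be stated as a hypothesis of your proof. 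With those two glosses repaired, your proof is a complete and correct substitute for the citation.
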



\section{Kodaira dimension $0$}\label{dim0}

The regular surfaces with Kodaira dimension zero are the $K3$ surfaces and Enriques surfaces. In both classes of surfaces, the canonical divisor is nef. Firstly, we deal with the AOP condition.

\begin{theorem}
Let $S$ be a $K3$ or an Enriques surface. Then, $S$ does not satisfy the AOP condition.
\end{theorem}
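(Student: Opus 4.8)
The plan is to exploit the fact that on both $K3$ and Enriques surfaces the canonical class $K_S$ is numerically trivial, so that the product $-K_S\cdot D$ vanishes for \emph{every} divisor $D$, while nonzero nef divisors certainly exist; this directly breaks the AOP condition. First I would recall the relevant classification facts about the canonical bundle: on a $K3$ surface one has $K_S\sim 0$ (the canonical bundle is trivial), whereas on an Enriques surface $K_S$ is a nontrivial $2$-torsion class, so that $2K_S\sim 0$ but $K_S\not\sim 0$. In either case $K_S$ maps to the zero class modulo numerical equivalence; in particular $K_S\equiv 0$.

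Next, since intersection numbers depend only on numerical equivalence classes, the numerical triviality of $K_S$ immediately yields $-K_S\cdot D=0$ for every divisor $D$ on $S$, irrespective of its nefness or effectivity. It then remains only to exhibit an explicit nonzero nef divisor in order to violate the AOP condition. Because $S$ is a smooth projective surface, it carries an ample divisor $H$; an ample divisor is nef and satisfies $H^2>0$, so $H$ is not numerically trivial and in particular $H\neq 0$. Thus $H$ is a nef divisor with $-K_S\cdot H=0$ yet $H\neq 0$, which contradicts the defining requirement of AOP. Hence neither a $K3$ nor an Enriques surface satisfies the AOP condition.

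I do not expect a genuine obstacle here: once the numerical triviality of $K_S$ is in hand the conclusion is immediate. The only point deserving a moment of care is the Enriques case, where $K_S$ is nonzero as a divisor class; one must observe that, being torsion, it is nonetheless numerically trivial, so that its intersection product with any divisor still vanishes. Alternatively, one could invoke the adjunction criterion recorded in the earlier remark, noting that both surface types carry smooth elliptic curves $C$ with $C^2=0$ and $p_a(C)=1$, but the numerical-triviality argument is cleaner and uniform across both classes.
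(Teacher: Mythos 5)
Your proposal is correct and rests on the same one-line idea as the paper: the canonical class has zero intersection with everything, so any nonzero nef divisor violates AOP; in the $K3$ case the paper uses exactly your witness, an ample divisor $H$ with $-K_S\cdot H=0$ and $H\neq 0$. The only divergence is the Enriques case: the paper takes $K_S$ itself as the violating divisor (it is nontrivial, nef, and $K_S^2=0$), whereas you reuse the ample divisor after observing that the $2$-torsion class $K_S$ is numerically trivial. Both are immediate; your variant has the mild advantages of being uniform across the two cases and of not depending on whether ``$D=0$'' in the AOP definition is read as the zero divisor class or merely as numerical triviality (under the latter reading the paper's Enriques witness $K_S$ would itself count as zero, while your ample $H$ still works).
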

\begin{proof}
Assume that $S$ is a $K3$ surface. In this case, the canonical divisor is trivial. So, if $H$ is an ample divisor then $H\cdot K_S=0$ but $H\neq 0$.

Now, assume that $S$ is an Enriques surface. So, the canonical divisor $K_S$ is a non-trivial nef divisor and satisfies $K_S^2=0$.
\end{proof}

Next, we review the HH condition. Knutsen and Lopez gave in \cite{KL07} a vanishing theorem for $H^1(S,\mathcal{L})$, where $\mathcal{L}$ is a line bundle on $S$, that gives necessarily and sufficient conditions for $K3$ and Enriques surfaces:

\begin{theorem}[Knutsen-Lopez]\label{K3E-KL}
Let $S$ be a $K3$ or an Enriques surface and let $\mathcal{L}$ be a line bundle on $S$ such that $\mathcal{L}>0$ and $\mathcal{L}^2\geq 0$. Then $H^1(S,\mathcal{L})\neq 0$ if and only if one of the following occurs:
\begin{enumerate}
    \item[(i)] $\mathcal{L}\sim nE$ for $E>0$ nef and primitive with $E^2=0$, $n\geq 2$ and $h^1(S,\mathcal{L})=n-1$ if $S$ is a $K3$ surface, $h^1(S,\mathcal{L})=\lfloor \frac{n}{2} \rfloor$ if $S$ is an Enriques surface;
    \item[(ii)] $\mathcal{L}\sim nE+K_S$ for $E>0$ nef and primitive with $E^2=0$, $S$ is an Enriques surface, $n\geq 3$ and $h^1(S,\mathcal{L})=\lfloor \frac{n-1}{2} \rfloor$; 
    \item[(iii)] there is a divisor $\Delta>0$ such that $\Delta^2=-2$ and $\Delta\cdot \mathcal{L}\leq -2$.
\end{enumerate}
 \end{theorem}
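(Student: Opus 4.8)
The plan is to reduce the non-vanishing of $H^1(S,\mathcal{L})$ to purely numerical-geometric data by combining Riemann--Roch, Serre duality, a vanishing theorem, and an analysis of the decomposition of $\mathcal{L}$ into a nef part and a fixed negative part. First I would set up the cohomological bookkeeping. On a $K3$ surface $K_S\sim 0$ and on an Enriques surface $2K_S\sim 0$, so Riemann--Roch reads $\chi(\mathcal{L})=\tfrac{1}{2}\mathcal{L}^2+\chi(\mathcal{O}_S)$ with $\chi(\mathcal{O}_S)=2$ for $K3$ and $\chi(\mathcal{O}_S)=1$ for Enriques. Since $\mathcal{L}>0$, Serre duality gives $h^2(S,\mathcal{L})=h^0(S,K_S-\mathcal{L})=0$, because $K_S-\mathcal{L}$ cannot be effective. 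Hence $h^1(S,\mathcal{L})=h^0(S,\mathcal{L})-\chi(\mathcal{L})$, and the whole problem becomes deciding exactly when $h^0(\mathcal{L})$ exceeds the Riemann--Roch expectation. I would then split the argument according to whether $\mathcal{L}$ is nef.

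In the nef case, if $\mathcal{L}^2>0$ then $\mathcal{L}$ is big and nef, and Kawamata--Viehweg vanishing yields $h^1(S,\mathcal{L})=0$; on an Enriques surface one applies it to $K_S+(\mathcal{L}-K_S)$, using that $\mathcal{L}-K_S\equiv\mathcal{L}$ numerically, so this big-and-nef regime contributes nothing to the list. The remaining nef possibility is $\mathcal{L}^2=0$. Writing $\mathcal{L}=nE$ (respectively $\mathcal{L}=nE+K_S$ on Enriques) with $E$ the primitive nef class of square $0$ on its ray, I would use that such a primitive class is represented by a genus-one pencil; computing $h^0(\mathcal{O}_S(nE))$ directly from the fibration, together with the identity $h^1=h^0-\chi$ above, should produce the stated values $h^1=n-1$ on $K3$ and the floor expressions $\lfloor n/2\rfloor$, $\lfloor(n-1)/2\rfloor$ on Enriques, yielding cases (i) and (ii).

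In the non-nef case, some irreducible curve $\Gamma$ satisfies $\mathcal{L}\cdot\Gamma<0$; since $\mathcal{L}$ is effective, $\Gamma$ is a fixed component, and on a $K3$ or Enriques surface such a curve is a $(-2)$-curve. I would peel off these negative curves and study the restriction sequences $0\to\mathcal{O}_S(\mathcal{L}-\Delta)\to\mathcal{O}_S(\mathcal{L})\to\mathcal{O}_\Delta(\mathcal{L})\to 0$ for effective sums $\Delta$ of such curves. Since $H^1(\Delta,\mathcal{O}_\Delta(\mathcal{L}))\neq 0$ precisely when $\deg(\mathcal{L}\vert_\Delta)=\mathcal{L}\cdot\Delta\le -2$ --- which is exactly where the bound $\Delta\cdot\mathcal{L}\le -2$, rather than merely $\Delta\cdot\mathcal{L}<0$, enters --- one obtains non-vanishing precisely in the form of case (iii); conversely, the existence of such a $\Delta$ forces $h^1\neq 0$ through the same sequence.

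I expect the main obstacle to be the Enriques bookkeeping rather than the overall architecture. The two-torsion of $K_S$ makes $h^0(\mathcal{O}_S(mE))$ depend on the parity of $m$ and on whether one twists by $K_S$, which is the source of both the floor functions and the split between (i) and (ii); making the pencil computation uniform across parities, and verifying that no exotic square-zero nef class escapes the ``multiple of a primitive pencil'' description, is the delicate point. A secondary subtlety is confirming that, after removing all fixed negative curves, one always lands back in the already-understood nef regime, so that the three cases are genuinely exhaustive and mutually exclusive.
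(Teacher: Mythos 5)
First, a point of context: the paper does not prove this statement at all --- it is quoted verbatim from Knutsen and Lopez \cite{KL07} and used as a black box in the sections on $K3$ and Enriques surfaces, so your attempt is being measured against the original research-level theorem rather than against an argument reproduced in the paper. The routine parts of your plan are sound: Serre duality does kill $h^2$ (on an Enriques surface, $K_S-\mathcal{L}$ effective together with $\mathcal{L}>0$ would make $K_S$ effective, contradicting $p_g=0$), so $h^1(S,\mathcal{L})=h^0(S,\mathcal{L})-\chi(\mathcal{L})$; Kawamata--Viehweg disposes of the big-and-nef case (your twist by $K_S$ on Enriques is legitimate since nefness and bigness are numerical); and, granted the structure result that a primitive nef class $E$ with $E^2=0$ is an elliptic pencil (on $K3$) or a half-fiber (on Enriques), the fibration computation does reproduce the values $n-1$, $\lfloor n/2\rfloor$ and $\lfloor (n-1)/2\rfloor$ of cases (i) and (ii).

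The genuine gap is case (iii), i.e.\ the entire non-nef analysis, which is precisely the ``sharp'' content of \cite{KL07}. Three concrete problems. (a) Your asserted equivalence ``$H^1(\Delta,\mathcal{O}_\Delta(\mathcal{L}))\neq 0$ precisely when $\mathcal{L}\cdot\Delta\le -2$'' is valid only for $\Delta$ irreducible, hence $\Delta\cong\mathbb{P}^1$. Statement (iii) quantifies over \emph{all} effective divisors $\Delta$ with $\Delta^2=-2$, which may be reducible (e.g.\ $\Gamma_1+\Gamma_2$ with $\Gamma_i^2=-2$, $\Gamma_1\cdot\Gamma_2=1$) or non-reduced (e.g.\ $2\Gamma_1+\Gamma_2$ with $\Gamma_1\cdot\Gamma_2=2$), and for such $\Delta$ the equivalence is false: a line bundle of bidegree $(-3,2)$ on a chain of two rational curves has total degree $-1$ yet nonzero $H^1$. (b) Even when $H^1(\Delta,\mathcal{L}|_\Delta)\neq 0$, your restriction sequence yields $H^1(S,\mathcal{L})\neq 0$ only after controlling the connecting map into $H^2(S,\mathcal{L}-\Delta)\cong H^0(S,K_S+\Delta-\mathcal{L})^{\ast}$, which requires a careful (e.g.\ minimal) choice of $\Delta$ and is not automatic. (c) Most seriously, in the forward direction non-nefness of an effective $\mathcal{L}$ only produces an irreducible fixed $(-2)$-curve $\Gamma$ with $\mathcal{L}\cdot\Gamma\le -1$, whereas (iii) demands $\le -2$; the case where every fixed curve meets $\mathcal{L}$ in exactly $-1$ is where your peeling stalls. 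Peeling such a $\Gamma$ preserves $h^0$ and $h^1$, but a later curve $\Gamma'$ with $(\mathcal{L}-\Gamma)\cdot\Gamma'\le -2$ only gives $\mathcal{L}\cdot\Gamma'\le -2+\Gamma\cdot\Gamma'$, so conclusions about the peeled bundle do not transfer back to the original $\mathcal{L}$; one must assemble several peeled curves, with multiplicities, into a single $\Delta$ and show the lattice conditions $\Delta^2=-2$ and $\Delta\cdot\mathcal{L}\le -2$ can always be achieved --- or else the peeling terminates in a nef bundle of type (i), which does not place $\mathcal{L}$ itself (being $nE$ plus a nonzero fixed part) in any of the three cases. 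Closing exactly this loop is the theorem of Knutsen--Lopez, and your sketch, which itself flags exhaustiveness as a ``subtlety,'' contains no argument for it.
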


Using the above result, we will provide a classification for the $K3$ surfaces that satisfy the HH condition and we will prove that Enriques surfaces never satisfy the HH condition.

\begin{theorem}
Let $S$ be a $K3$. The following statements are equivalent:
\begin{enumerate}
    \item $S$ does not satisfy the HH condition.
    \item There exists a non-trivial effective nef divisor $D$ such that $D^2=0$. 
    \item $S$ admits an elliptic fibration.
\end{enumerate}
\end{theorem}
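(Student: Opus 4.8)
The statement is a three-way equivalence for a $K3$ surface, so the natural plan is a cycle $(1)\Rightarrow(2)\Rightarrow(3)\Rightarrow(1)$, or else prove $(2)\Leftrightarrow(3)$ directly and connect both to $(1)$. I would exploit two special features of $K3$ surfaces: the canonical divisor is trivial ($K_S=0$), and $S$ is regular with $p_g=1$. The triviality of $K_S$ is the key simplification, since it collapses the Knutsen–Lopez dichotomy: cases (i) and (ii) of Theorem~\ref{K3E-KL} merge (the $+K_S$ disappears), and the parameter counts simplify.

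**The plan for each implication.** The cleanest route begins with $(2)\Leftrightarrow(3)$. For a $K3$ surface, a nef divisor $D$ with $D^2=0$ gives, via Riemann–Roch (with $K_S=0$ and $p_a(S)=1$), that $h^0(S,\mathcal{O}_S(D))-h^1+h^2 = 2 + \tfrac12 D^2 = 2$, and since $D$ is nef one controls $h^2(S,\mathcal{O}_S(D))=h^0(S,\mathcal{O}_S(-D))=0$ for $D\neq 0$, forcing $h^0(S,\mathcal{O}_S(D))\geq 2$. Thus $|D|$ (or a suitable primitive multiple $E$ with $E^2=0$) defines a pencil, and a standard argument shows the general member is a smooth elliptic curve, producing an elliptic fibration $S\to\mathbb{P}^1$. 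The converse $(3)\Rightarrow(2)$ is immediate: the class $f$ of a fiber of an elliptic fibration is effective, nef, and satisfies $f^2=0$.

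**Connecting to the HH condition.** For $(2)\Rightarrow(1)$ I would feed the nef divisor $D$ with $D^2=0$ into Theorem~\ref{K3E-KL}: writing $D\sim nE$ with $E$ primitive, nef, $E^2=0$, case (i) gives $h^1(S,\mathcal{O}_S(nE))=n-1>0$ as soon as $n\geq 2$, so $S$ fails HH. When $D=E$ is already primitive ($n=1$) one must be slightly more careful, but here I would directly invoke that a primitive $E$ with $E^2=0$ has some multiple $2E$ that is effective, nef, with $(2E)^2=0$, to which case (i) applies---this furnishes an effective nef divisor violating the HH condition. For the contrapositive $(1)\Rightarrow(2)$, I would take any effective nef $D$ with $h^1(S,\mathcal{O}_S(D))\neq 0$ and run through the trichotomy of Theorem~\ref{K3E-KL}: cases (i) and (ii) produce precisely a nef $E>0$ with $E^2=0$, while case (iii) must be excluded by nefness, since a nef divisor $\mathcal{L}$ satisfies $\Delta\cdot\mathcal{L}\geq 0$ for every effective $\Delta$, contradicting $\Delta\cdot\mathcal{L}\leq -2$.

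**Anticipated obstacle.** The main subtlety is the passage from an abstract nef class $E$ with $E^2=0$ to an honest elliptic \emph{fibration} in step $(2)\Rightarrow(3)$: one must verify that $|E|$ (or its primitive reduction) is base-point free and that its general member is a smooth genus-one curve, rather than a multiple or singular fiber configuration. This is where I expect to lean on classical $K3$ theory---the fact that on a $K3$ surface an effective divisor $E$ with $E^2=0$ and $E$ primitive has $|E|$ free of base points and induces a genus-one fibration (a result of Saint-Donat / Piateski-Shapiro--Shafarevich). I would cite this rather than reprove it, and the remaining implications are then comparatively routine bookkeeping with Riemann–Roch and the Knutsen–Lopez theorem.
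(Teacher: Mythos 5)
Your proposal is correct and follows essentially the same route as the paper: the Knutsen--Lopez theorem (with case (iii) ruled out by nefness, and the primitive case $n=1$ handled by passing to $2E$) links the failure of HH to nef classes of square zero, while classical K3 theory (Saint-Donat, as packaged in Huybrechts) links those to elliptic fibrations. Your organization into $(1)\Leftrightarrow(2)$ and $(2)\Leftrightarrow(3)$, with the fiber class giving $(3)\Rightarrow(2)$ directly, is only a cosmetic variation of the paper's cycle $(1)\Rightarrow(2)\Rightarrow(3)\Rightarrow(1)$ and, if anything, slightly streamlines the last implication.
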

\begin{proof}
 $(1)\Rightarrow(2)$.
 Assume that $S$ does not satisfy HH. So, there exists a non-trivial effective nef divisor $D$ such that $H^1(S,\mathcal{O}_S(D))\neq 0$. Since $D$ is nef, by Theorem \ref{K3E-KL} there exists  a nef and primitive divisor $E>0$ such that $E^2=0$ and $D\sim nE$ for some $n\geq 2$. In particular, $D^2=0$.

 $(2)\Rightarrow(3)$. 
 Let $D$ be a non-trivial effective nef divisor $D^2=0$. By \cite[Proposition \textbf{2}.3.10]{Huy16}, there exists a smooth irreducible elliptic curve $E$ such that $D\sim mE$, for some $m>0$. Such curve $E$ induces the elliptic fibration.
 
 $(3)\Rightarrow(1)$.
 Assume that $S$ admits an elliptic fibration. Then, by \cite[Proposition \textbf{11}.1.3]{Huy16} there exists a non-trivial divisor $L$ such that $L^2=0$. Moreover,  there exists a non-trivial nef divisor $D$ such that $D^2=0$ by \cite[Remark \textbf{8}.2.13]{Huy16}. Therefore, by \cite[Proposition \textbf{2}.3.10]{Huy16} there exists a smooth irreducible elliptic curve $E$ such that $D\sim mE$, for some $m>0$. Finally, since a smooth irreducible elliptic curve is primitive, we are able to construct non-trivial nef divisors as in (i) of Theorem \ref{K3E-KL}. 
\end{proof}

\begin{example}
The Fermat quartic $S$ in $\mathbb{P}^3$ given by the equation \begin{equation*}
    x_0^4+x_1^4+x_2^4+x_3^4=0
\end{equation*}
is not a HH surface. Indeed, one can take the line $\ell\subset S$ given by $x_1=\xi x_0$ and $x_3=\xi x_2$, where $\xi$ is a primitive eight root of the unity. Projecting $S$ with center $\ell$ onto a disjoint line in $\mathbb{P}^3$, one can construct an elliptic fibration explicitly (see \cite[Example~\textbf{2}.3.11]{Huy16}).
\end{example}

\begin{theorem}\label{dimh0}
Let $S$ be a $K3$ Harbourne-Hirschowitz surface. Let $D$ be a divisor on $S$ and write $D=M+F$, where $M$ is the mobile part of $D$ and $F$ is the fixed part of $D$. Then,
\begin{equation*}
    h^0(S,\mathcal{O}_S(D))=\begin{cases}
    0 & \text{ if } D \text{ is not effective, }\\
    1 & \text{ if } M=0, \\
    2+\dfrac{M^2}{2} & \text{ otherwise. }
    \end{cases}
\end{equation*}
\end{theorem}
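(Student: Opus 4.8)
The plan is to treat the three cases of the formula separately, concentrating the real content in the last case, and to lean throughout on two structural features of a $K3$ surface. First, the canonical class is trivial, $K_S\sim 0$, so Serre duality reads $h^2(S,\mathcal{O}_S(M))=h^0(S,\mathcal{O}_S(-M))$, and the Riemann--Roch formula quoted in the introduction specializes (using $p_a(S)=1$ and $K_S\cdot M=0$) to $\chi(\mathcal{O}_S(M))=2+\tfrac{1}{2}M^2$. Second, I will use the elementary fact, valid on any surface, that passing to the mobile part leaves the space of sections unchanged: $h^0(S,\mathcal{O}_S(D))=h^0(S,\mathcal{O}_S(M))$, since every section of $\mathcal{O}_S(D)$ vanishes along the fixed part $F$.

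The two degenerate cases are immediate. If $D$ is not effective then $|D|=\emptyset$, so $h^0(S,\mathcal{O}_S(D))=0$. In the remaining cases $D$ is effective, so the decomposition $D=M+F$ is genuinely defined; if moreover $M=0$, then $h^0(S,\mathcal{O}_S(D))=h^0(S,\mathcal{O}_S(M))=h^0(S,\mathcal{O}_S)=1$ since $S$ is connected.

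For the main case $M\neq 0$ the first step is to observe that $M$ is an effective nef divisor. Nefness follows from $M$ being a mobile part: for any irreducible curve $C$, a general member of $|M|$ does not contain $C$ as a component (there are no fixed components), whence $M\cdot C\geq 0$. With $M$ effective and nef, the Harbourne--Hirschowitz hypothesis applies directly and gives $h^1(S,\mathcal{O}_S(M))=0$. Next I show $h^2(S,\mathcal{O}_S(M))=0$: by Serre duality and $K_S\sim 0$ it equals $h^0(S,\mathcal{O}_S(-M))$, and since $M$ is effective and nonzero while the only effective divisor in the trivial class is $0$ (because $h^0(S,\mathcal{O}_S)=1$), the class $-M$ cannot be effective, so $h^0(S,\mathcal{O}_S(-M))=0$. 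Feeding $h^1=h^2=0$ into Riemann--Roch yields $h^0(S,\mathcal{O}_S(M))=2+\tfrac{1}{2}M^2$, and the identity $h^0(S,\mathcal{O}_S(D))=h^0(S,\mathcal{O}_S(M))$ finishes the case. Note that $M^2\geq 0$ by nefness and $M^2$ is even because the $K3$ intersection form is even, so the right-hand side is a well-defined integer, necessarily $\geq 2$, consistent with $|M|$ being at least a pencil.

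The only place demanding care is the reduction to the mobile part together with the verification that $M$ is nef, since this is precisely what activates the HH hypothesis and converts the problem into a bookkeeping application of Riemann--Roch and Serre duality. I expect no genuine obstacle beyond stating these standard surface-theory facts cleanly and tracking the case distinction between $D$ not effective, $M=0$, and $M\neq 0$.
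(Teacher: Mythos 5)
Your proof is correct and follows essentially the same route as the paper's: reduce to the mobile part $M$, observe that $M$ is effective and nef so the Harbourne--Hirschowitz hypothesis gives $h^1(S,\mathcal{O}_S(M))=0$, kill $h^2$ by Serre duality with $K_S\sim 0$, and conclude by Riemann--Roch. You merely spell out some steps the paper leaves implicit (the nefness of the mobile part, the non-effectivity of $-M$, and the value $\chi(\mathcal{O}_S)=2$), which if anything makes the argument cleaner.
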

\begin{proof}
If $D$ is not effective, then it is clear that $\mathcal{O}_S(D)$ has no global sections and consequently $h^0(S,\mathcal{O}_S(D))=0$. If $D$ does not have mobile part, then $D=F$ and then $h^0(S,\mathcal{O}_S(D))=1$. 

Now, assume that $D$ is an effective divisor and $M\neq 0$. We have that $M$ is a non-zero effective nef divisor such that $h^0(S,\mathcal{O}_S(D))=h^0(S,\mathcal{O}_S(M))$, so lets compute the dimension of the space of global sections associated with $M$. Since $K_S$ is trivial we have that $h^2(S,\mathcal{O}_S(D))=h^0(S,\mathcal{O}_S(-D))=0$ and using the hypothesis that $S$ is Harbourne-Hirschowitz we have that $h^1(S,\mathcal{O}_S(D))=0$. We conclude the result by Riemann-Roch theorem.
\end{proof}

\begin{theorem}
Let $S$ be an Enriques surface. Then, $S$ does not satify the HH condition.
\end{theorem}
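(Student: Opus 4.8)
The plan is to exhibit a single effective nef divisor on $S$ whose first cohomology does not vanish, so that the Harbourne-Hirschowitz condition fails by definition. The natural candidate comes from the fact that every Enriques surface carries an elliptic pencil. First I would invoke the classical structure theory: over $\mathbb{C}$ any Enriques surface $S$ admits a genus-one fibration $f\colon S\to\mathbb{P}^1$ whose multiple fibers have multiplicity two. Writing $E$ for a half-fiber (the reduced curve underlying such a multiple fiber), $E$ is an effective, nef, primitive divisor with $E>0$ and $E^2=0$, and a general fiber $F$ satisfies $F\sim 2E$.

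I would then take $D:=F\sim 2E$. This divisor is effective and nef (being a fiber of $f$), it satisfies $D>0$ together with $D^2=4E^2=0\geq 0$, so the hypotheses of the Knutsen-Lopez vanishing theorem (Theorem~\ref{K3E-KL}) are met. Since $D\sim nE$ with $E$ primitive nef and isotropic and $n=2\geq 2$, we are precisely in case (i) of that theorem for an Enriques surface; hence $h^1(S,\mathcal{O}_S(D))=\lfloor n/2\rfloor=1\neq 0$. As $D$ is an effective nef divisor with nonzero first cohomology, the surface $S$ does not satisfy the Harbourne-Hirschowitz condition, which is what we want.

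The cohomology computation is immediate once Theorem~\ref{K3E-KL} is in place, so the only substantive ingredient is the existence of a primitive nef isotropic effective class $E$, equivalently of an elliptic pencil on $S$. This is the step I would take most care over, verifying in particular that the half-fiber is genuinely effective, primitive, and nef (the latter because it is numerically proportional to the nef fiber class). However, the existence of such a fibration is a standard property of Enriques surfaces furnished by the Enriques-Kodaira classification, so I do not anticipate it being a genuine obstacle; the argument is then essentially a one-line application of the already-quoted vanishing theorem.
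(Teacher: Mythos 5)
Your proof is correct and follows essentially the same route as the paper: both arguments combine the existence of an elliptic (genus-one) fibration on any Enriques surface with case (i) of the Knutsen--Lopez theorem to produce an effective nef divisor with $h^1\neq 0$. If anything, your version is more precise than the paper's sketch, since you correctly identify the \emph{half-fiber} $E$ as the primitive isotropic nef class and take $D\sim 2E$, whereas the paper speaks of a smooth elliptic curve (a general fiber, which is itself $2E$ and hence not primitive) and leaves the construction implicit.
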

\begin{proof}
Since every Enriques surface admits an elliptic fibration (see for example \cite[Theorem 17.5]{BHPV} or \cite[Theorem 10.17]{Bad}), we can consider a smooth irreducible elliptic curve $E$ on $S$. So, we can construct non-trivial nef divisors as in (i) or (ii) of Theorem \ref{K3E-KL} such that the dimension of their first cohomology groups is non-zero.  
\end{proof}

In spite of the above result, there are nef divisors on an Enriques surfaces that satisfy the vanishing of the first cohomology group: if $D$ is a nef divisor such that $D^2\geq 2$, then $h^1(S,\mathcal{O}_S(D))=0$ (see \cite[Section~4]{Dol16}). On the other hand, since Enriques surfaces are elliptic surfaces of genus zero, the dimension of the cohomology groups can be calculated explicitly, see Theorem \ref{cohomellip} in the previous section.


\section{Kodaira dimension $-\infty$}\label{rational}
Surfaces with Kodaira dimension equal to $-\infty$ are the rational surfaces and the ruled surfaces.  
Consider a ruled surface $S$ over a curve $C$ of genus $g$. In this case, the irregularity of $S$ depends on the genus of the curve $C$ (see \cite[Proposition III.21]{Bea96}):
\begin{equation*}
    h^1(S,\mathcal{O}_S)=g(C).
\end{equation*}
So, if $S$ is a regular ruled surface, then necessarily $g(C)=0$ and $S$ is a Hirzebruch surface. Therefore, the only possible regular surfaces with Kodaira dimension equal to $-\infty$ are the rational surfaces. Therefore, from now on we focus our attention on the rational surfaces.

Let $S$ be a rational surface. For convenience of the reader, we remind some notions that will be considered in this section. A divisor $D$ on $S$ is \textit{regular} if $H^1(S,\mathcal{O}_S(D))=0$, otherwise $D$ is an \textit{irregular divisor}. The \textit{N\'eron-Severi group} $\mathrm{NS}(S)$ is the quotient group of the group of Cartier divisors on $S$ modulo numerical equivalence. It is well-known that $\mathrm{NS}(S)$ is a free finitely generated $\mathbb{Z}$-module and that coincides with the Picard group of $S$. The \textit{Picard number of $S$}
is the rank of the N\'eron-Severi group. We denote by $\mathcal{K}_S$ the class of a canonical divisor on $\mathrm{NS}(S)$ and will be called the \textit{canonical class}. The surface $S$ is \textit{anticanonical} if there exists an effective anticanonical divisor on $S$.

The strategy in this case is to divide the study depending on the self-intersection of $K_S$. The first case to consider is when $K_S^2>0$ and here we have a positive answer for the HH and AOP conditions.

\begin{proposition}\label{k2pos}
Let $S$ be a rational surface such that $K_S^2>0$. Then, $S$ is a HH surface and satisfies the AOP condition. 
\end{proposition}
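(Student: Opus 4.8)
The plan is to treat the two conclusions separately: I would establish the AOP condition first by a Hodge Index argument, then check that $S$ is anticanonical by Riemann--Roch, and finally deduce the HH condition from the AOP condition via the criterion already recorded in the introduction (\cite[Theorem~2.5]{FL18}). The genuinely self-contained part is the AOP condition; the HH part is then a citation, though I will indicate the mechanism behind it.

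For the AOP condition, I would start from a nef divisor $D$ with $-K_S\cdot D=0$ and exploit that $(-K_S)^2=K_S^2>0$. Since the intersection form on $\mathrm{NS}(S)\otimes\mathbb{R}$ has signature $(1,\rho-1)$ by the Hodge Index Theorem (with $\rho$ the Picard number), a class of strictly positive self-intersection has negative definite orthogonal complement; applying this to $-K_S$ and the orthogonality $D\cdot(-K_S)=0$ yields $D^2\le 0$, with equality only if $D$ is numerically trivial. On the other hand, $D$ nef forces $D^2\ge 0$, so $D^2=0$ and $D\equiv 0$. As recalled in Section~\ref{hhaop}, for a rational surface numerical triviality means that $D$ is the zero divisor, so the AOP condition holds.

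For the HH condition I would first verify that $S$ is anticanonical. By Riemann--Roch, $\chi(\mathcal{O}_S(-K_S))=1+\tfrac12\big((-K_S)^2-K_S\cdot(-K_S)\big)=1+K_S^2$, while Serre duality gives $h^2(S,\mathcal{O}_S(-K_S))=h^0(S,\mathcal{O}_S(2K_S))=0$ because a rational surface has Kodaira dimension $-\infty$. Hence $h^0(S,\mathcal{O}_S(-K_S))\ge 1+K_S^2>0$, so $-K_S$ is effective and $S$ is an anticanonical rational surface. Having shown in addition that $S$ satisfies the AOP condition, I would invoke \cite[Theorem~2.5]{FL18} to conclude that $S$ is a HH surface.

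The step I expect to carry the real weight is the passage from AOP to HH. One cannot simply apply a Kawamata--Viehweg type vanishing to an effective nef $D$, since $-K_S$ need not be nef when $K_S^2>0$: on a Hirzebruch surface $\mathbb{F}_n$ with $n\ge 3$ one has $K_S^2=8$ yet $-K_S\cdot C_0<0$ for the negative section $C_0$. The mechanism underlying the quoted theorem is the exact sequence $0\to\mathcal{O}_S(D+K_S)\to\mathcal{O}_S(D)\to\mathcal{O}_C(D)\to 0$ obtained by restricting to an anticanonical curve $C\in|-K_S|$, which has arithmetic genus $1$ and trivial dualizing sheaf $\omega_C\cong\mathcal{O}_C$; since $D\cdot(-K_S)>0$ for every nonzero nef $D$ by the AOP already proved, the restriction $\mathcal{O}_C(D)$ has positive degree and hence $h^1(C,\mathcal{O}_C(D))=0$, and an induction reduces $h^1(S,\mathcal{O}_S(D))$ to this vanishing. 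Controlling the non-nef twist $D+K_S$ throughout the induction (and the connectedness of the anticanonical curve) is the technical heart, which is precisely why I would delegate it to \cite[Theorem~2.5]{FL18} rather than reprove it here.
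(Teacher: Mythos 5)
Your proof is correct, but the HH half follows a genuinely different route from the paper's. The AOP part is identical: nefness gives $D^2\geq 0$, the Hodge Index Theorem applied to $-K_S$ (which has positive self-intersection) gives $D^2\leq 0$ with equality only for numerically trivial classes, and numerical triviality on a rational surface forces $D=0$. For the HH condition, however, the paper argues directly: it notes that every nef divisor $F$ on a rational surface with $K_S^2>0$ is also effective, and splits into cases; if $-K_S\cdot F=0$ then $F=0$ by the AOP argument and $h^1(S,\mathcal{O}_S(F))=h^1(S,\mathcal{O}_S)=0$ by regularity, while if $-K_S\cdot F>0$ the vanishing is exactly \cite[Theorem III.1 (a) and (b)]{Har97}. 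You instead verify explicitly that $S$ is anticanonical (via Riemann--Roch and the vanishing of the plurigenera, a fact the paper uses only implicitly through its citations of Harbourne) and then delegate to \cite[Theorem~2.5]{FL18}, that is, to the proposition, quoted later in this same section of the paper, that an anticanonical rational surface satisfying AOP is HH. The two arguments share the same root, since that theorem of \cite{FL18} is itself proved from Harbourne's vanishing theorem; what your packaging buys is a cleaner logical structure (anticanonicity $+$ AOP $\Rightarrow$ HH) with the anticanonicity step spelled out, while the paper's case split keeps the argument self-contained at the level of Harbourne's results and makes visible that the only candidate irregular nef classes are those orthogonal to $-K_S$. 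Your side remarks are also accurate: $-K_S$ need not be nef when $K_S^2>0$ (on $\mathbb{F}_n$ with $n\geq 3$ one indeed has $-K_S\cdot C_0=2-n<0$ while $K_S^2=8$), so no Kawamata--Viehweg shortcut is available, and the restriction-to-an-anticanonical-curve mechanism you sketch is the actual engine behind the quoted theorem.
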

\begin{proof}
Let $D$ be a nef divisor on $S$ such that $-K_S\cdot D=0$. Since $D$ is nef, then $D^2\geq 0$. Hodge Index theorem implies that $D^2\leq 0$, moreover, since $D^2=0$ then $D$ is numerically trivial. Therefore, $D=0$. This proves that $S$ satisfies the AOP condition.

Now, let $F$ be a nef divisor on $S$. Since $S$ is rational and $K_S^2>0$ we have that $F$ is also effective. If $-K_S\cdot F=0$, by the above argument we have that $F=0$ and then $h^1(S,\mathcal{O}_S(F))=h^1(S,\mathcal{O}_S)=0$ since $S$ is regular. If $-K_S\cdot F>0$, then \cite[Theorem III.1 (a) and (b)]{Har97} imply that $h^1(S,\mathcal{O}_S(F))=0$. Thus, we conclude that $S$ is a Harbourne-Hirschowitz surface.
\end{proof}

Next, we consider the case when $K_S^2=0$. For this case, the answer depends entirely on the multiples of $-K_S$, in fact, Harbourne proved in \cite[Corollary 10]{Har96} the following result for the HH condition and the same argument can be applied to obtain an answer for the AOP condition. 

\begin{theorem}[Harbourne]\label{k2zero}
Let $S$ be a rational surface such that $K_S^2=0$. The following statements are equivalent:
\begin{enumerate}
    \item $S$ does not satisfy the HH condition (respectively, does not satisfy the AOP condition). 
    \item There exists $r>0$ such that $-rK_S$ is an irregular nef divisor (respectively, is a nef divisor).
\end{enumerate}
\end{theorem}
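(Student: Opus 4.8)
The plan is to prove the biconditional by establishing that the failure of each property is governed precisely by the anticanonical pencil, reducing both cases to statements about the multiples $-rK_S$. Since $K_S^2 = 0$, the class $-K_S$ sits on the boundary of the nef cone, and the key structural fact is that for a rational surface with $K_S^2=0$, any nef divisor $D$ with $-K_S \cdot D = 0$ must be (numerically) a nonnegative multiple of $-K_S$; this follows from the Hodge Index Theorem, since $-K_S$ and $D$ are both nef with $(-K_S)^2 = 0$, forcing $D$ to lie in the isotropic ray spanned by $-K_S$. I would first record this observation, as it is the common engine for both equivalences.

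For the AOP part, the implication $(2)\Rightarrow(1)$ is immediate: if $-rK_S$ is a nonzero nef divisor for some $r>0$, then it is nef with $-K_S\cdot(-rK_S) = -r K_S^2 = 0$ but $-rK_S \neq 0$, which directly violates the AOP condition. Conversely, if $S$ fails AOP, there is a nonzero nef divisor $D$ with $-K_S\cdot D = 0$; by the Hodge Index argument above, $D$ is numerically proportional to $-K_S$, say $D \equiv \lambda(-K_S)$ with $\lambda > 0$, and clearing denominators produces the required nef multiple $-rK_S$. Here I would lean on the fact that $\mathrm{NS}(S) = \mathrm{Pic}(S)$ for a rational surface, so numerical proportionality upgrades to the genuine statement about $-rK_S$ being nef.

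For the HH part, I would invoke Harbourne's \cite[Corollary 10]{Har96} essentially as cited, but the substantive content is to show that the \emph{only} candidate irregular nef divisors are the multiples of $-K_S$. The strategy: given any effective nef divisor $F$ with $h^1(S,\mathcal{O}_S(F)) \neq 0$, use the Riemann--Roch theorem together with $p_a(S)=0$ and Serre duality ($h^2(S,\mathcal{O}_S(F)) = h^0(S,\mathcal{O}_S(K_S-F))$) to force $-K_S\cdot F = 0$; one argues that if $-K_S\cdot F > 0$ then Harbourne's vanishing results (as used in Proposition~\ref{k2pos}) give $h^1 = 0$, a contradiction. Once $-K_S\cdot F = 0$, the Hodge Index reduction places $F$ on the anticanonical ray, so $F \equiv -rK_S$ for some $r>0$, and $F$ irregular then translates into $-rK_S$ being an irregular nef divisor.

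The main obstacle I anticipate is the gap between numerical and linear equivalence. The Hodge Index argument only gives that $F$ is numerically a multiple of $-K_S$, whereas the statement (2) concerns the specific divisor $-rK_S$ up to linear equivalence. For the HH direction this is delicate because $h^1$ is a property of the line bundle $\mathcal{O}_S(F)$, not merely of its numerical class, so I must argue that the irregularity is inherited by the actual multiple $-rK_S$ rather than some numerically equivalent but linearly distinct divisor. This is precisely the point where I would need to invoke the precise form of Harbourne's result and the structure of anticanonical rational surfaces with $K_S^2=0$, where the relevant divisors form a well-understood family; handling the possible torsion in $\mathrm{Pic}(S)$ carefully is where the argument requires the most care.
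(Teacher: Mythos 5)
Your proposal follows essentially the same route as the paper's proof: Harbourne's vanishing theorem (\cite[Theorem III.1]{Har97}, used in the paper via Proposition~\ref{k2pos}) forces any irregular or AOP-violating nef divisor to be orthogonal to $-K_S$, and a Hodge-index/lattice argument then places it on the anticanonical ray; the paper packages this second step as Harbourne's Lemma II.4 of \cite{Har97}, which is exactly the ``structural fact'' you reconstruct. However, your justification of that fact rests on a false premise: on a rational surface with $K_S^2=0$ the class $-K_S$ is effective (by Riemann--Roch) but in general \emph{not} nef, so you cannot argue ``since $-K_S$ and $D$ are both nef.'' The argument survives without it: $D$ nef gives $D^2\geq 0$; if $D^2>0$ then $D^{\perp}$ is negative definite, contradicting $(-K_S)^2=0$ and $-K_S\not\equiv 0$; hence $D^2=0$, and two orthogonal isotropic integral classes in a lattice of signature $(1,\rho-1)$ must be proportional.

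Two further points. First, the worry in your closing paragraph about torsion in $\mathrm{Pic}(S)$ is vacuous: a smooth rational surface has $H_1(S,\mathbb{Z})=0$, so $\mathrm{Pic}(S)\cong H^2(S,\mathbb{Z})\cong \mathrm{NS}(S)$ is torsion-free and numerical equivalence coincides with linear equivalence. The genuinely delicate point for the HH direction is instead the \emph{integrality} of the proportionality factor: ``clearing denominators,'' which works for AOP because nefness is a numerical property, does not transfer irregularity, since $h^1$ does not scale with multiples. You need $F\equiv -rK_S$ with $r$ a positive integer, which holds because $-K_S$ is a primitive class when $K_S^2=0$ (it has the form $3\mathcal{H}-\mathcal{E}_1-\cdots-\mathcal{E}_9$ in a suitable basis); this is precisely what Harbourne's Lemma II.4 supplies. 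Once $r$ is an integer, torsion-freeness gives $F\sim -rK_S$, the line bundles are isomorphic, and irregularity transfers with nothing left to be careful about. Second, a small missing step in the HH case of $(2)\Rightarrow(1)$: an irregular nef divisor $-rK_S$ violates the HH condition only if it is \emph{effective}, since HH quantifies over effective nef divisors; the paper records that on a rational surface with $K_S^2=0$ every nef class is effective (Riemann--Roch together with the vanishing of $h^2$), and your proposal never addresses effectiveness.
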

\begin{proof} 
Since $S$ is rational and $K_S^2=0$, then $S$ is anticanonical, moreover, we have that every nef class also is an effective class. Let $F$ be a nef divisor on $S$. By \cite[Theorem III (a) and (b)]{Har97}, we have that if $-K_S\cdot F>0$, then $h^1(S,\mathcal{O}_S(F))=0$. So, the only possibility for a nef class to be irregular is that it is orthogonal to $-K_S$. Also, since $K_S^2=0$, by \cite[Lemma II.4]{Har97} we have that $-K_S\cdot F=0$ and $F^2=0$ imply that $F$ is a multiple of $-K_S$. 
The result follows from these facts.
\end{proof}

In particular, this result gives a criterion to ensure the HH and AOP conditions when $K^2=0$.

\begin{example}
Let $S$ be an elliptic rational surface, that is, there exists a morphism  $\rho:S\rightarrow\mathbb{P}^1$ whose general fiber is a smooth curve of genus one. In this case, $S$ is never a Harbourne-Hirschowitz surface nor satisfies the AOP condition. Indeed, if $D$ is a nef divisor on $S$ such that $-K_S.D=0$, then there exists a positive integer $n$ such that $D\sim -nK_S$ and  $h^1(S,\mathcal{O}_S(D))=n$, see \cite[Proposition 1.2]{AGL16}.
\end{example}

Finally, the last case to consider is when $K_S^2<0$. In such situation, the surface can be anticanonical or not, we focus our attention in the anticanonical case. Even in such case, both HH and AOP conditions may fail in general.

\begin{example}
Let $\pi:\widetilde{S}\rightarrow S$ be the blow-up of a rational elliptic surface $S$ at one point. Consider a nef divisor $D$ on $S$ which is orthogonal to $-K_S$. As we pointed out in the previous example, there exists a positive integer such that $D\sim -nK_S $ and $h^1(S,\mathcal{O}_S(D))=n$. The pull-back $\pi^*(D)$ of $D$ is a nef divisor on $\widetilde{S}$ and satisfies $h^1(\widetilde{S},\mathcal{O}_{\widetilde{S}}(\pi^*(D)))=h^1(S,\mathcal{O}_S(D))=n$. So, $\widetilde{S}$ is not a HH surface.
\end{example}

\begin{example}\label{threelines}
Consider the following configuration on $\mathbb{P}^2$. Let $L_p$, $L_q$ and $L_r$ be three lines that are not concurrent at one point. Consider points $p_1,p_2,p_3\in L_p$, $q_1,q_2,q_3\in L_q$ and $r_1,r_2,r_3,r_4\in L_r$ such that the points are different from the intersection points of the lines (see Figure \ref{10pts}). Let $T$ be the blow-up of $\mathbb{P}^2$ at these 10 points. 
\begin{figure}[h!]
 \centering
 \includegraphics[scale=.5]{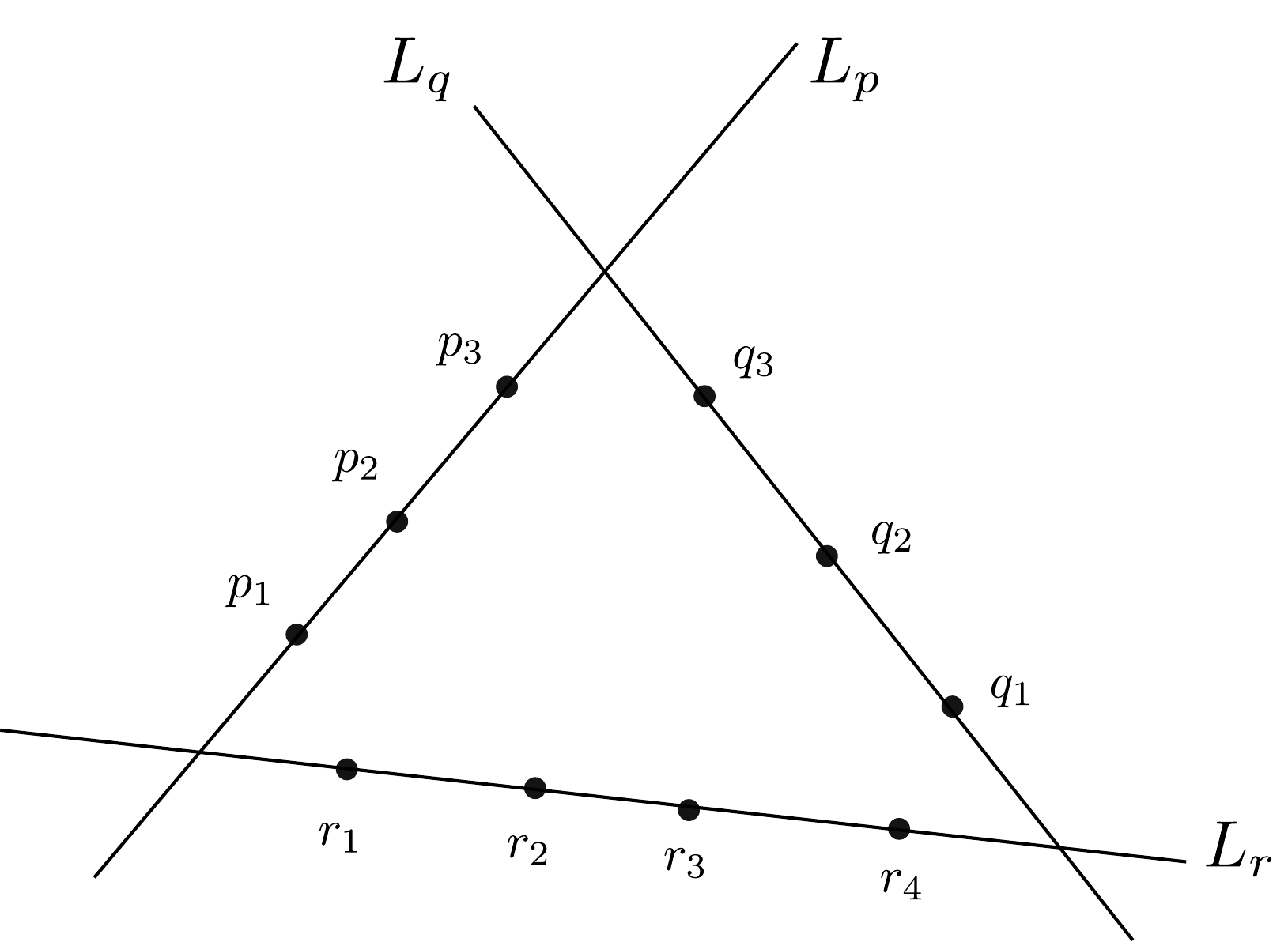}
 \caption{Configuration of points which defines the surface $T$.} \label{10pts}
\end{figure}

We claim that $T$ does not satisfy the AOP condition. In fact, consider the divisor class
\begin{equation*}
    \mathcal{D}=4\mathcal{H}-\mathcal{E}_{p_1}-\mathcal{E}_{p_2}-2\mathcal{E}_{p_3}-\mathcal{E}_{q_1}-\mathcal{E}_{q_2}-2\mathcal{E}_{q_3}-\mathcal{E}_{r_1}-\mathcal{E}_{r_2}-\mathcal{E}_{r_3}-\mathcal{E}_{r_4}.
\end{equation*}
This class corresponds to the strict transform of the divisor $L_p+L_q+L_r+L_{p_3q_3}$ on $\mathbb{P}^2$, where $L_{p_3q_3}$ denotes the line passing through $p_3$ and $q_3$. This is a nef class since the intersection of $\mathcal{D}$ with each of its irreducible components is zero and it is orthogonal to $-\mathcal{K}_T$. 
\end{example}

On the other hand, we have the following criterion proved in \cite[Theorem 2.5]{FL18} which relates the AOP and HH conditions: 

\begin{proposition}
Let $S$ be an anticanonical rational surface. If $S$ satisfies the AOP condition, then $S$ is a HH surface.
\end{proposition}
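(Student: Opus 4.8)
The plan is to reduce the HH condition to a short case analysis on the sign of the intersection number $-K_S\cdot D$, invoking the two ingredients already available: Harbourne's regularity criterion \cite[Theorem III.1 (a) and (b)]{Har97} and the AOP hypothesis. To verify the HH condition I only need to show $h^1(S,\mathcal{O}_S(D))=0$ for an arbitrary \emph{effective} nef divisor $D$ on $S$, so this is the object I would fix at the outset.

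First I would record that $-K_S\cdot D\geq 0$ always holds. Since $S$ is anticanonical, the class $-K_S$ is represented by an effective divisor, say $-K_S\sim\sum_i a_i C_i$ with $a_i>0$ and $C_i$ irreducible curves. As $D$ is nef we have $D\cdot C_i\geq 0$ for each $i$, hence $-K_S\cdot D=\sum_i a_i(D\cdot C_i)\geq 0$. This observation is what guarantees that only two cases occur and that no divisor with $-K_S\cdot D<0$ needs to be treated separately.

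In the case $-K_S\cdot D>0$, the divisor $D$ is effective and nef with positive anticanonical degree, so Harbourne's criterion \cite[Theorem III.1]{Har97} applies directly and yields $h^1(S,\mathcal{O}_S(D))=0$; this is exactly the implication already used in the proofs of Proposition~\ref{k2pos} and Theorem~\ref{k2zero}. In the remaining case $-K_S\cdot D=0$, the divisor $D$ is nef and orthogonal to $-K_S$, so the AOP hypothesis forces $D=0$. Then $h^1(S,\mathcal{O}_S(D))=h^1(S,\mathcal{O}_S)=0$, because every rational surface is regular (cf. Remark~\ref{remark-hhregular}). Combining the two cases shows that every effective nef divisor is regular, that is, $S$ is a HH surface.

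The argument is short because the analytic heart is delegated to Harbourne's theorem; the genuine content is the dichotomy it creates. I expect the main obstacle to be conceptual rather than computational: one must be sure that orthogonality to $-K_S$ is the \emph{only} obstruction to regularity for effective nef divisors, which is precisely what \cite[Theorem III.1]{Har97} supplies, and one must confirm that the nonnegativity $-K_S\cdot D\geq 0$ genuinely eliminates every other possibility. Once these two facts are secured, the AOP hypothesis disposes of the boundary case $-K_S\cdot D=0$ and Harbourne's criterion disposes of the interior case $-K_S\cdot D>0$, which together give the claim.
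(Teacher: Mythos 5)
Your proof is correct and takes essentially the same route as the paper: the paper cites \cite[Theorem~2.5]{FL18} for this proposition and explicitly identifies its key ingredient as Harbourne's vanishing criterion \cite[Theorem~III.1]{Har97}, which is precisely your dichotomy. Namely, $-K_S\cdot D\geq 0$ since $-K_S$ is effective and $D$ is nef, Harbourne's theorem handles the case $-K_S\cdot D>0$, and the AOP hypothesis together with the regularity of rational surfaces handles the boundary case $-K_S\cdot D=0$.
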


The key result used in the latter is due to Harbourne \cite[Theorem III.1]{Har97} in which he proved that if a nef class intersects the anticanonical class non-negatively, then the dimension of its first cohomology group is equal to zero. 

So, it remains to determine if the other implication holds true, that is, if $S$ is a HH surface, then it satisfies the AOP condition. In an equivalent way, we want to prove that if $S$ does not satisfy the AOP condition, then it is not a HH surface. In fact, since on an anticanonical rational surface any nef divisor intersecting an anticanonical divisor positively is a regular one, then the only case in which the HH condition could fail is when a non-trivial nef divisor is orthogonal to an anticanonical divisor. 

In one of the results of \cite{Har97}, Harbourne studied the properties of nef classes depending on their intersection with the anticanonical class. Particularly, he obtained the following for those nef classes that are orthogonal to the anticanonical class:

\begin{theorem}[Harbourne]\label{hariii.1}
Let $S$ be an anticanonical rational surface, $\mathcal{F}$ be a nef divisor class and let $D$ be a non-zero section of $-\mathcal{K}_S$. Consider $\mathcal{F}=\mathcal{H}+\mathcal{N}$, where $\mathcal{H}$ and $\mathcal{N}$ are the classes of the free part and the fixed part of $\mathcal{F}$ respectively. If $-\mathcal{K}_S\cdot \mathcal{F}=0$, then either:
\begin{enumerate}
    \item $\mathcal{N}=0$ in which case the sections of $\mathcal{F}$ are base point free, $\mathcal{F}\otimes\mathcal{O}_D$ is trivial and either
    \begin{enumerate}
        \item[(1.1)] $\mathcal{F}^2>0$ and $h^1(S,\mathcal{O}_S(\mathcal{F}))=1$, or
        \item[(1.2)] $\mathcal{F}=r\mathcal{C}$ and $h^1(S,\mathcal{O}_S(\mathcal{F}))=r$, where $r>0$ and $\mathcal{C}$ is a class of self-intersection equal to zero whose general section is reduced and irreducible.
    \end{enumerate}
    \item $\mathcal{N}$ is a smooth rational curve of self-intersection $-2$ in which case $h^1(S,\mathcal{O}_S(\mathcal{F}))=1$, $\mathcal{N}\otimes\mathcal{O}_D$ is trivial, and $\mathcal{H}=r\mathcal{C}$, where $r>1$ and $\mathcal{C}$ is reduced and irreducible with $\mathcal{C}^2=0$, $\mathcal{C}\cdot \mathcal{N}=1$ and $\mathcal{C}\otimes\mathcal{O}_D$ being trivial. 
    \item $\mathcal{N}+\mathcal{K}_S$ is an effective class.
\end{enumerate}
Moreover, (3) occurs if and only if $\mathcal{F}\cdot D=0$ but $\mathcal{F}\otimes\mathcal{O}_D$ is non-trivial. In this case, there exists a birational morphism of $S$ to a smooth projective anticanonical rational surface $T$ where one of the following occurs:
\begin{enumerate}
    \item[(a)] $\mathcal{K}_T^2<0$, there is a nef class $\mathcal{F}'$ on $T$, $\mathcal{F}$ is the pull-back of $\mathcal{F}'-\mathcal{K}_T$ and $h^1(S,\mathcal{O}_S(\mathcal{F}))=h^1(S,\mathcal{O}_T(\mathcal{F}'))=0$, or
    \item[(b)] $\mathcal{K}_T^2=0$, $\mathcal{H}$ and $\mathcal{N}$ are the pull-backs of $-s\mathcal{K}_T$ and $-r\mathcal{K}_T$ for some integers $s\geq 0$ and $r>0$ respectively, and $h^1(S,\mathcal{O}_S(\mathcal{F}))=\sigma$, where $\sigma=0$ if $s=0$, and otherwise $r<\tau$ and $\sigma=s/\tau$, where $\tau$ is the least positive integer such that the restriction of $-\tau \mathcal{K}_S$ to $D$ is trivial. 
\end{enumerate}
\end{theorem}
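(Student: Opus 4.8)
The plan is to study $\mathcal{F}$ by restricting it to an effective anticanonical curve $D\in|-\mathcal{K}_S|$. By adjunction $p_a(D)=1$, and since $-\mathcal{K}_S\cdot\mathcal{F}=0$ the restriction $\mathcal{F}\otimes\mathcal{O}_D$ is a line bundle of degree $\mathcal{F}\cdot D=0$ on an arithmetic-genus-one curve. The whole dichotomy of the theorem is then governed by a single fact about $\mathrm{Pic}^0(D)$: a degree-zero line bundle $L$ on $D$ satisfies $h^0(D,L)=h^1(D,L)=1$ when $L$ is trivial and $h^0(D,L)=h^1(D,L)=0$ when $L$ is non-trivial. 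Accordingly the ``Moreover'' clause is built in from the outset, since cases (1) and (2) will be exactly those where $\mathcal{F}\otimes\mathcal{O}_D$ is trivial, while case (3) is exactly the non-trivial case.

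The cohomological engine is the sequence obtained by tensoring $0\to\mathcal{O}_S(-D)\to\mathcal{O}_S\to\mathcal{O}_D\to 0$ with $\mathcal{O}_S(\mathcal{F})$, namely
\begin{equation*}
0\to\mathcal{O}_S(\mathcal{F}+\mathcal{K}_S)\to\mathcal{O}_S(\mathcal{F})\to\mathcal{F}\otimes\mathcal{O}_D\to 0.
\end{equation*}
Because $\mathcal{F}$ is nef and non-zero, $-\mathcal{F}$ is not effective, so Serre duality gives $H^2(S,\mathcal{O}_S(\mathcal{F}+\mathcal{K}_S))\cong H^0(S,\mathcal{O}_S(-\mathcal{F}))^\vee=0$; hence $H^1(S,\mathcal{O}_S(\mathcal{F}))\to H^1(D,\mathcal{F}\otimes\mathcal{O}_D)$ is surjective and the long exact sequence relates $h^1(S,\mathcal{O}_S(\mathcal{F}))$ to $h^1(S,\mathcal{O}_S(\mathcal{F}+\mathcal{K}_S))$ together with the genus-one correction term above. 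I would then run a descending induction replacing $\mathcal{F}$ by $\mathcal{F}+\mathcal{K}_S$: in the non-trivial case the correction terms vanish and one descends until the class becomes regular, producing $h^1=0$ in (3a) and, when $-\mathcal{K}_S$ has finite order $\tau$ in $\mathrm{Pic}^0(D)$, the quantized value $\sigma=s/\tau$ in (3b); in the trivial case each step contributes a $1$, yielding $h^1=1$ in (1.1) and (2), and $h^1=r$ when $\mathcal{F}=r\mathcal{C}$ is a multiple of a fibre class in (1.2). The integer $\tau$ is precisely the order of $-\mathcal{K}_S\otimes\mathcal{O}_D$, which is why it controls the multiple-fibre bookkeeping.

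The structural engine identifies the three geometric shapes. Writing $\mathcal{F}=\mathcal{H}+\mathcal{N}$ with $\mathcal{H}$ mobile and $\mathcal{N}$ the fixed part, the fact that a general member of $|\mathcal{F}|$ shares no component with any fixed curve forces $\mathcal{H}$ to be nef and effective and confines all negativity to $\mathcal{N}$. Hodge Index applied to $\mathcal{F}$ and the effective class $-\mathcal{K}_S$ then controls the sign of $\mathcal{F}^2$: this splits case (1) into (1.1), where $\mathcal{F}^2>0$ and $\mathcal{F}$ is big, giving a birational morphism onto a surface, and (1.2), where $\mathcal{F}^2=0$ and $|\mathcal{F}|$ is composed with a pencil $\mathcal{F}=r\mathcal{C}$ whose general member is reduced and irreducible with $\mathcal{C}^2=0$. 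When $\mathcal{N}\neq 0$ but $\mathcal{N}+\mathcal{K}_S$ is not effective, I would use $-\mathcal{K}_S\cdot\mathcal{F}=0$ together with $\mathcal{F}\cdot\mathcal{N}\geq 0$ and adjunction to pin $\mathcal{N}$ down to a single smooth rational $(-2)$-curve and to force $\mathcal{H}=r\mathcal{C}$ with $\mathcal{C}\cdot\mathcal{N}=1$, which is case (2). The remaining possibility, $\mathcal{N}+\mathcal{K}_S$ effective, is case (3); here I would contract the $(-1)$-curves detected by the fixed part to reach a relatively minimal anticanonical rational surface $T$, and then split according to $\mathcal{K}_T^2<0$ or $\mathcal{K}_T^2=0$ (the latter feeding into the multiple-fibre description underlying Theorem \ref{k2zero}), tracking $\mathcal{F}$, $\mathcal{H}$ and $\mathcal{N}$ as pull-backs along the way.

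The main obstacle is this last structural step. Proving that the fixed part is \emph{exactly} a single $(-2)$-curve in case (2), and constructing the birational morphism $S\to T$ in case (3) so that $\mathcal{H}$ and $\mathcal{N}$ descend to the explicit multiples $-s\mathcal{K}_T$ and $-r\mathcal{K}_T$ with the sharp count $\sigma=s/\tau$, requires a careful analysis of the minimal models of anticanonical rational surfaces and of how the order $\tau$ of $-\mathcal{K}_S\otimes\mathcal{O}_D$ behaves under blow-down. The cohomological bookkeeping, by contrast, is essentially routine once the descending induction and the genus-one degree-zero dichotomy are in place.
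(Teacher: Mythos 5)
A preliminary point: the paper itself contains no proof of this statement. It is Harbourne's theorem, quoted from \cite{Har97} (Theorem III.1 there and the surrounding results) and used as a black box in the proof of the paper's main theorem, so there is no internal argument to compare yours against; the only meaningful benchmark is Harbourne's original proof. Measured against that benchmark, your skeleton is the right one and is essentially his: restrict to a section $D$ of $-\mathcal{K}_S$, observe that nefness of $\mathcal{F}$ plus $\mathcal{F}\cdot D=0$ forces $\mathcal{F}\otimes\mathcal{O}_D$ to have degree zero on every component of $D$, invoke the degree-zero/genus-one dichotomy in $\mathrm{Pic}(D)$, and run it through the long exact sequence of $0\to\mathcal{O}_S(\mathcal{F}+\mathcal{K}_S)\to\mathcal{O}_S(\mathcal{F})\to\mathcal{F}\otimes\mathcal{O}_D\to 0$, with $H^2(S,\mathcal{O}_S(\mathcal{F}+\mathcal{K}_S))\cong H^0(S,\mathcal{O}_S(-\mathcal{F}))^{\vee}=0$. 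Reading the ``Moreover'' clause as the trivial/non-trivial dichotomy for $\mathcal{F}\otimes\mathcal{O}_D$ is likewise correct.

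As a proof, however, the proposal has genuine gaps at exactly the points where the theorem has content. First, the $h^1$ counts in the trivial-restriction cases do not follow from ``each step contributes a $1$'': to get $h^1=1$ on the nose in (1.1) you must prove $h^1(S,\mathcal{O}_S(\mathcal{F}+\mathcal{K}_S))=0$, which is Ramanujam/Kawamata--Viehweg vanishing for the nef and big class $\mathcal{F}$ (equivalently, connectedness of the general member of $|\mathcal{F}|$), and in (1.2) the value $h^1=r$ is the multiple-fibre computation $h^0(\mathcal{O}_{r\mathcal{C}})=r$, which requires controlling the order of $\mathcal{O}_{\mathcal{C}}(\mathcal{C})$ in $\mathrm{Pic}^0(\mathcal{C})$; both counts also use surjectivity of $H^0(S,\mathcal{O}_S(\mathcal{F}))\to H^0(D,\mathcal{F}\otimes\mathcal{O}_D)$, easy when $\mathcal{N}=0$ but needing an argument in case (2). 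Second, the descending induction in the non-trivial case is not self-propagating: the $m$-th twist needs $H^0(S,\mathcal{O}_S(-\mathcal{F}-m\mathcal{K}_S))=0$, and since $-\mathcal{K}_S$ is effective this eventually fails (for $\mathcal{F}=-r\mathcal{K}_T$ on a surface with $\mathcal{K}_T^2=0$ it fails precisely at $m=r$); what happens where the descent stops is exactly the content of (3b), and your sketch does not address it. Third, and most importantly, the structural assertions that carry the theorem --- that in case (2) the fixed part is exactly one smooth rational $(-2)$-curve with $\mathcal{C}\cdot\mathcal{N}=1$, and that in case (3) there is a birational morphism to $T$ under which $\mathcal{H}$ and $\mathcal{N}$ become $-s\mathcal{K}_T$ and $-r\mathcal{K}_T$ with the sharp value $\sigma=s/\tau$ --- are precisely the steps you defer (``requires a careful analysis''); they are the heart of Harbourne's argument and no proof is offered for them. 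A smaller point: the genus-one dichotomy itself ($h^0\neq 0$ iff trivial) needs justification when $D$ is reducible or non-reduced, which anticanonical curves frequently are. In short, the strategy is sound and faithful to \cite{Har97}, but the attempt is an outline rather than a proof.
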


The last result implies that if $S$ is an anticanonical rational surface which does not satisfy the AOP condition, then there are cases when the regularity of nef divisors may fail but also there are cases when the regularity may hold true. So, in the following result we study the HH anticanonical rational surfaces for which the AOP condition fails. First, we need the following lemma:

\begin{lemma}\label{casek2neg}
Let $\pi:S\rightarrow T$ be a birational morphism where $S$ and $T$ are anticanonical rational surfaces and $\mathcal{K}_T^2<0$. Let $\mathcal{F}$ be a non-zero nef class on $S$ such that $-\mathcal{K}_S\cdot \mathcal{F}=0$. Assume that there exists a nef class $\mathcal{F}'$ on $T$ such that $\mathcal{F}=\pi^*(\mathcal{F}'-\mathcal{K}_T)$. Then $T$ does not satisfy the AOP condition.
\end{lemma}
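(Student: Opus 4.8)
The plan is to exhibit directly a non-zero nef class on $T$ that is orthogonal to the anticanonical class, which immediately contradicts the AOP condition for $T$. The natural candidate is the push-forward $\pi_*\mathcal{F}$, which by hypothesis equals $\mathcal{G}:=\mathcal{F}'-\mathcal{K}_T$ since $\pi_*\pi^*$ is the identity on $\mathrm{NS}(T)$; equivalently, $\mathcal{G}$ is the unique class on $T$ with $\pi^*\mathcal{G}=\mathcal{F}$. So the whole proof reduces to checking that $\mathcal{G}$ is non-zero, nef, and orthogonal to $-\mathcal{K}_T$.

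First I would verify that $\mathcal{G}$ is a non-zero nef class. Non-vanishing is immediate: the map $\pi^*\colon\mathrm{NS}(T)\to\mathrm{NS}(S)$ attached to a birational morphism is injective (a fact already used in this paper), so $\mathcal{F}=\pi^*\mathcal{G}\neq 0$ forces $\mathcal{G}\neq 0$. For nefness I would use that $\pi^*$ reflects nefness: for an irreducible curve $C\subset T$ with strict transform $\widetilde C\subset S$ one has $\mathcal{G}\cdot C=\mathcal{G}\cdot\pi_*\widetilde C=\pi^*\mathcal{G}\cdot\widetilde C=\mathcal{F}\cdot\widetilde C\geq 0$, because $\mathcal{F}$ is nef and $\widetilde C$ is effective; hence $\mathcal{G}$ is nef. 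Note that nefness of $\mathcal{G}$ is obtained directly from that of $\mathcal{F}$, so the nefness of $\mathcal{F}'$ assumed in the statement is not needed here.

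Next I would compute $-\mathcal{K}_T\cdot\mathcal{G}$ and show it vanishes. The key identity is $\pi_*\mathcal{K}_S=\mathcal{K}_T$, which holds because $\mathcal{K}_S=\pi^*\mathcal{K}_T+\sum_i a_i\mathcal{E}_i$ with each $\mathcal{E}_i$ exceptional (so $\pi_*\mathcal{E}_i=0$) together with $\pi_*\pi^*=\mathrm{id}$. Combining this with the projection formula for the intersection pairing gives
\begin{equation*}
-\mathcal{K}_T\cdot\mathcal{G}
= -\pi_*\mathcal{K}_S\cdot\mathcal{G}
= -\mathcal{K}_S\cdot\pi^*\mathcal{G}
= -\mathcal{K}_S\cdot\mathcal{F}
= 0,
\end{equation*}
the last equality being the hypothesis $-\mathcal{K}_S\cdot\mathcal{F}=0$. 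Thus $\mathcal{G}$ is a non-zero nef class with $-\mathcal{K}_T\cdot\mathcal{G}=0$; since on a rational surface a numerically trivial class is the zero class, $\mathcal{G}$ corresponds to a non-zero nef divisor and witnesses the failure of the AOP condition on $T$.

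The argument is short, and I do not expect a genuine obstacle: the only delicate points are the functorial bookkeeping, namely that $\pi^*$ is injective and reflects nefness and that $\pi_*\mathcal{K}_S=\mathcal{K}_T$, so that the single intersection computation above goes through. The conceptual step, which I would want to state clearly, is simply the recognition that $\mathcal{F}'-\mathcal{K}_T=\pi_*\mathcal{F}$ is the class descending $\mathcal{F}$ to $T$ and inheriting both its orthogonality to the anticanonical class and its nefness. I also observe that the hypothesis $\mathcal{K}_T^2<0$, inherited from the setting of Theorem~\ref{hariii.1}(3)(a), does not appear to be used in this particular computation.
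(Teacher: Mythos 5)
Your proof is correct, and it produces the same witness as the paper's own proof, namely the class $\mathcal{F}'-\mathcal{K}_T$ on $T$, shown to be non-zero, nef, and orthogonal to $-\mathcal{K}_T$; but the way you verify these three facts is genuinely different. The paper factors $\pi$ into a chain of point blow-ups $\pi_1,\dots,\pi_n$ and works only with pull-backs: nefness is tested against pull-backs of effective classes, and orthogonality is obtained by an explicit recursion, writing $\pi_i^*(-\mathcal{K}_{T_{i-1}})=-\mathcal{K}_{T_i}+\mathcal{E}_i$ at each stage and killing the exceptional terms one blow-up at a time. Your use of $\pi_*$ together with the projection formula, $\pi_*\pi^*=\mathrm{id}$, and $\pi_*\mathcal{K}_S=\mathcal{K}_T$ compresses that entire recursion into a one-line computation, and testing nefness against strict transforms of irreducible curves is equally valid. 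There is also a substantive difference in the non-vanishing step: the paper rules out $\mathcal{F}'-\mathcal{K}_T=0$ by arguing that $\mathcal{F}'=\mathcal{K}_T$ is impossible for a nef class, which implicitly relies on $\mathcal{K}_T^2<0$ (a nef class has non-negative self-intersection), whereas you deduce it from $\mathcal{F}=\pi^*(\mathcal{F}'-\mathcal{K}_T)\neq 0$ and the injectivity of $\pi^*$. Consequently your observation at the end is accurate: your route uses neither the hypothesis $\mathcal{K}_T^2<0$ nor the nefness of $\mathcal{F}'$, so it proves the slightly more general statement that any non-zero nef class on $S$ orthogonal to $-\mathcal{K}_S$ which is a pull-back from $T$ forces the failure of the AOP on $T$. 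What the paper's longer computation buys is self-containedness with respect to the tools it has already set up (the behaviour of classes under a single blow-up), while your argument presupposes the standard push-forward formalism on N\'eron-Severi groups.
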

\begin{proof}
Since $\pi:S\rightarrow T$ is a birational morphism, then $\pi$ can be written as a composition of a finite number of blow-ups, namely,
\begin{equation*}
 \xymatrix{
 S =T_n \ar@{->}@/_{7mm}/[rrrrr]_{\pi} \ar[r]^{\pi_n} & T_{n-1} \ar[r]^{\pi_{n-1}} &T_{n-2} \ar[r]^{\pi_{n-2}}
 &\cdots \ar[r]^{\pi_{2}} & T_{1}\ar[r]^{\pi_1} & T_0=T,
 }
\end{equation*}
where for every $i=1,\dots,n$, $\pi_i:T_{i}\rightarrow T_{i-1}$ is a blow-up at one point and $T_{i-1}$ is an anticanonical rational surface such that $\mathcal{K}_{T_{i-1}}^2<0$. 
Then, the equation $\mathcal{F}=\pi^*(\mathcal{F}'-\mathcal{K}_T)$ can be written as
\begin{equation}\label{fpullbacks}
    \mathcal{F}=\pi_n^*\circ\pi_{n-1}^*\circ\cdots\circ\pi_{2}^*\circ\pi_1^*(\mathcal{F}'-\mathcal{K}_T).
\end{equation}
We will prove that the divisor class $\mathcal{F}'-\mathcal{K}_T$ implies the failure of the AOP condition on $T$.

$\bullet$ $\mathcal{F}'-\mathcal{K}_T$ is a non-zero nef class. If $\mathcal{F}'-\mathcal{K}_T=0$, we would have that $\mathcal{F}'=\mathcal{K}_T$ but this is impossible because $\mathcal{F}'$ is a nef class. Now, consider an effective class $\mathcal{C}$ on $T$. Since a blow-up preserves the effectiveness of a divisor class, then it follows that \begin{equation*}
    \pi_i^*\circ\pi_{i-1}^*\circ\cdots
    \circ\pi_2^*\circ\pi_1^*(\mathcal{C})
\end{equation*}
is an effective class on $T_i$ for every $i=1,\dots,n$. Using the fact that the intersection number is preserved by pull-backs of a blow-up and Equation \eqref{fpullbacks}, we have that
\begin{align*}
\mathcal{C}\cdot (\mathcal{F}'-\mathcal{K}_T) 
    &= \left(\pi_1^*(\mathcal{C})\right)\cdot \left(\pi_1^*(\mathcal{F}'-\mathcal{K}_T)\right) \\
    &= \left(\pi_2^*\circ\pi_1^*(\mathcal{C})\right)\cdot 
    \left(\pi_2^*\circ\pi_1^*(\mathcal{F}'-\mathcal{K}_T)\right) \\
    &\vdots \\
    &= \left(\pi_n^*\circ\cdots\circ\pi_1^*(\mathcal{C}) \right)\cdot  \left( \pi_n^*\circ\cdots\circ\pi_1^*(\mathcal{F}'-\mathcal{K}_T) \right)\\ 
    &=\left(\pi_n^*\circ\cdots\circ\pi_1^*(\mathcal{C}) \right)\cdot \mathcal{F},  
    \end{align*}
and the last quantity is non-negative because $\mathcal{F}$ is a nef divisor class. Thus, $\mathcal{F}'-\mathcal{K}_T$ is a non-zero nef class on $T$.

$\bullet$ $\mathcal{F}'-\mathcal{K}_T$ is orthogonal to $-\mathcal{K}_T$. Denote by $\mathcal{E}_i$ the class of the exceptional divisor of the blow-up $\pi_i:T_i\rightarrow T_{i-1}$ for $i=1,\dots,n$. The idea to prove that $-\mathcal{K}_{T}.(\mathcal{F}'-\mathcal{K}_T)=0$ is to use recursively that the intersection number is preserved under the pull-back of a blow-up. First, lets note that 
\begin{align*}
    -\mathcal{K}_T\cdot (\mathcal{F}'-\mathcal{K}_T) 
    &=\pi_1^*(-\mathcal{K}_T)\cdot \pi_1^*(\mathcal{F}'-\mathcal{K}_T)\\
    &= (-\mathcal{K}_{T_1}+\mathcal{E}_1)\cdot  \left(\pi_1^*(\mathcal{F}'-\mathcal{K}_T)\right) \\
    &= \left(-\mathcal{K}_{T_1}\cdot \pi_1^*(\mathcal{F}'-\mathcal{K}_T)\right) + \left(\mathcal{E}_1\cdot \pi_1^*(\mathcal{F}'-\mathcal{K}_T)\right) \\
    &=-\mathcal{K}_{T_1}\cdot \pi_1^*(\mathcal{F}'-\mathcal{K}_T).
\end{align*}
Now, we can calculate the intersection number $-\mathcal{K}_{T_1}\cdot \pi_1^*(\mathcal{F}'-\mathcal{K}_T)$ using the second blow-up:
\begin{align*}
    -\mathcal{K}_{T_1}\cdot \pi_1^*(\mathcal{F}'-\mathcal{K}_T) 
    &=\left(\pi_2^*(-\mathcal{K}_{T_1})\right)\cdot \left(\pi_2^*\circ\pi_1^*(\mathcal{F}'-\mathcal{K}_T)\right)\\
    &= (-\mathcal{K}_{T_2}+\mathcal{E}_2)\cdot  \left(\pi_2^*\circ\pi_1^*(\mathcal{F}'-\mathcal{K}_T)\right) \\
    &= \left(-\mathcal{K}_{T_2}\cdot \pi_2^*\circ\pi_1^*(\mathcal{F}'-\mathcal{K}_T)\right) + \left(\mathcal{E}_2\cdot \pi_2^*\circ\pi_1^*(\mathcal{F}'-\mathcal{K}_T)\right) \\
    &=-\mathcal{K}_{T_2}\cdot \pi_2^*\circ\pi_1^*(\mathcal{F}'-\mathcal{K}_T).
\end{align*}
So, we have that 
\begin{equation*}
    -\mathcal{K}_T\cdot (\mathcal{F}'-\mathcal{K}_T) 
    = -\mathcal{K}_{T_1}\cdot \pi_1^*(\mathcal{F}'-\mathcal{K}_T) 
    = -\mathcal{K}_{T_2}\cdot \pi_2^*\circ\pi_1^*(\mathcal{F}'-\mathcal{K}_T).
\end{equation*}
Using recursively this argument, we have that for every $i=1,\dots,n-1$, it occurs that 
\begin{equation*}
    -\mathcal{K}_{T_{i}}\cdot \pi_i^*\circ\cdots\circ\pi_1^*(\mathcal{F}'-\mathcal{K}_T)=-\mathcal{K}_{T_{i+1}}\cdot \pi_{i+1}^*\circ\pi_i^*\circ\cdots\circ\pi_1^*(\mathcal{F}'-\mathcal{K}_T), 
\end{equation*}
and consequently
\begin{equation*}
    -\mathcal{K}_T\cdot (\mathcal{F}'-\mathcal{K}_T) 
    = -\mathcal{K}_{T_1}\cdot \pi_1^*(\mathcal{F}'-\mathcal{K}_T) 
    =\cdots
    = -\mathcal{K}_{T_n}\cdot \pi_n^*\circ\cdots\circ\pi_1^*(\mathcal{F}'-\mathcal{K}_T)
    = -\mathcal{K}_S\cdot \mathcal{F}=0.
\end{equation*}
Therefore, we have the existence of a non-zero nef class on $T$ that is orthogonal to $-\mathcal{K}_T$. We conclude that $T$ does not satisfy the AOP condition.
\end{proof}

\begin{remark}
The conditions of the previous lemma does not characterize the surfaces that does not satisfy the AOP condition. In fact, consider the configuration of Example~\ref{threelines} that defines the surface $T$ and add another point $r_5$ on the line $L_r$. Let $S$ be the surface obtained from the blow-up of $\mathbb{P}^2$ at this configuration of $11$ points. So, we have a birational morphism $\pi:S\rightarrow T$, both surfaces are anticanonical and $\mathcal{K}_T^2<0$. We have that the class $\mathcal{D}$ on $T$ is a non-zero nef class that is orthogonal to $-\mathcal{K}_T$. Thus, we have that $\mathcal{F}=\pi^*(\mathcal{D})$ is a non-zero nef class on $S$ such that $-\mathcal{K}_S\cdot \mathcal{F}=0$, this implies that $S$ does not satisfy AOP. However, in this case we have that $\mathcal{D}=\widetilde{L_{p_3q_3}}-\mathcal{K}_T$ and $\widetilde{L_{p_3q_3}}$ is the class of a $(-1)$-curve, which is not a nef class.
\end{remark}

Now, we present our main result for anticanonical rational surfaces that satisfy the Harbourne-Hirschowitz condition but that do not satisfy the Anticanonical Orthogonal Property.

\begin{theorem}
Let $S$ be a HH anticanonical rational surface with $\mathcal{K}_S^2<0$. If $S$ does not satisfy the AOP condition, then there exists a birational morphism $\pi:S\rightarrow T$ where $T$ is a HH anticanonical rational surface with $\mathcal{K}_T^2=0$ and does not satisfy the AOP condition.
\end{theorem}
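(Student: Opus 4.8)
The plan is to produce the morphism by a descent on the Picard number, using Harbourne's structure theorem (Theorem \ref{hariii.1}) as the engine and the HH hypothesis to force the favourable case. Since $S$ does not satisfy the AOP condition, there is a non-zero nef class $\mathcal{F}$ on $S$ with $-\mathcal{K}_S\cdot\mathcal{F}=0$. First I would check that $\mathcal{F}$ is effective, so that the HH condition applies to it: one has $h^2(S,\mathcal{O}_S(\mathcal{F}))=h^0(S,\mathcal{O}_S(\mathcal{K}_S-\mathcal{F}))=0$, since otherwise $-\mathcal{F}=(\mathcal{K}_S-\mathcal{F})+(-\mathcal{K}_S)$ would be effective, which is impossible for a non-zero nef class; Riemann--Roch then gives $h^0(S,\mathcal{O}_S(\mathcal{F}))\geq\chi(\mathcal{O}_S(\mathcal{F}))=1+\tfrac12\mathcal{F}^2\geq 1$. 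As $S$ is HH, $h^1(S,\mathcal{O}_S(\mathcal{F}))=0$. Applying Theorem \ref{hariii.1} to $\mathcal{F}$, the cases (1.1), (1.2) and (2) all yield $h^1(S,\mathcal{O}_S(\mathcal{F}))\geq 1$ and are therefore excluded, so we must be in case (3). This produces a birational morphism $\pi\colon S\to T$ onto an anticanonical rational surface, of type (a) or (b).

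Next I would dispose of case (b), which will be the terminal step. Here $\mathcal{K}_T^2=0$ and $h^1(S,\mathcal{O}_S(\mathcal{F}))=\sigma$; since $S$ is HH this forces $\sigma=0$, hence $s=0$, so the free part $\mathcal{H}=\pi^*(-s\mathcal{K}_T)=0$ and $\mathcal{F}=\mathcal{N}=\pi^*(-r\mathcal{K}_T)$ with $r>0$. Because $\pi^*$ reflects nefness (for any curve $C$ on $T$ with strict transform $\widetilde{C}$ one has $\pi^*(-r\mathcal{K}_T)\cdot\widetilde{C}=-r\mathcal{K}_T\cdot C$), the class $-r\mathcal{K}_T$ is a non-zero nef divisor on $T$, so Theorem \ref{k2zero} shows that $T$ fails the AOP condition; and $T$ is HH by Proposition \ref{contrachh}. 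Thus in case (b) the surface $T$ is already the target claimed by the theorem.

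The remaining case (a) is where the descent takes place. Now $\mathcal{K}_T^2<0$ and $\mathcal{F}=\pi^*(\mathcal{F}'-\mathcal{K}_T)$ for a nef class $\mathcal{F}'$ on $T$, so Lemma \ref{casek2neg} applies and $T$ fails AOP, while Proposition \ref{contrachh} gives that $T$ is HH. Hence $T$ is again a HH anticanonical rational surface with $\mathcal{K}_T^2<0$ not satisfying AOP, precisely the hypotheses of the statement. I would therefore run the whole argument by strong induction on the Picard number $\rho$ (equivalently on $-\mathcal{K}^2=\rho-10$): provided $\rho(T)<\rho(S)$, the induction hypothesis supplies a birational morphism $\pi'\colon T\to T'$ with $T'$ a HH anticanonical rational surface, $\mathcal{K}_{T'}^2=0$, failing AOP, and then $\pi'\circ\pi\colon S\to T'$ is the desired morphism.

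The hard part will be guaranteeing termination, i.e. that the contraction $\pi$ in case (a) is genuinely non-trivial, so that $\rho(T)<\rho(S)$ strictly. This is the pivot of the induction: since case (a) requires $\mathcal{K}_T^2<0$ while case (b) requires $\mathcal{K}_T^2=0$, once the Picard number is forced to drop at every case-(a) step it cannot decrease indefinitely, so after finitely many applications we are necessarily thrown into case (b), which delivers $\mathcal{K}_T^2=0$. Establishing the strict drop amounts to unwinding Harbourne's construction of the morphism in case (3): the fixed part $\mathcal{N}$ is non-zero (as $\mathcal{K}_S$ is never effective on a rational surface) and is contracted by $\pi$, and I would verify there that in type (a) the target $T$ is reached by blowing down at least one curve. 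The rest is the intersection-number bookkeeping already carried out in Lemma \ref{casek2neg}.
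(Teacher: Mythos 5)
Your proposal reproduces the paper's proof in all structural respects: effectiveness of $\mathcal{F}$ (the paper invokes the standard fact that nef classes on an anticanonical rational surface are effective, you re-derive it via $h^2=0$ and Riemann--Roch, which is fine), exclusion of cases (1) and (2) of Theorem \ref{hariii.1} by the HH hypothesis, the terminal treatment of case (b) (HH forces $s=0$, so $\mathcal{F}=\pi^*(-r\mathcal{K}_T)$ and $-r\mathcal{K}_T$ is a non-zero nef class orthogonal to $-\mathcal{K}_T$), the use of Proposition \ref{contrachh} and Lemma \ref{casek2neg} in case (a), and an iteration bounded by the Picard number. This is exactly the paper's argument.

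The genuine problem is the mechanism you propose for the step you rightly single out as the hard part, namely the strict drop of the Picard number in case (a). You plan to obtain it by showing that the fixed part $\mathcal{N}$ is ``contracted by $\pi$''; this is provably false. In case (3) of Theorem \ref{hariii.1} the class $\mathcal{N}+\mathcal{K}_S$ is effective, and since $\pi_*\mathcal{K}_S=\mathcal{K}_T$ we get
\begin{equation*}
\pi_*\mathcal{N}=\pi_*(\mathcal{N}+\mathcal{K}_S)+(-\mathcal{K}_T),
\end{equation*}
the sum of an effective (possibly zero) class and the effective non-zero class $-\mathcal{K}_T$ (non-zero because $T$ is rational); hence $\pi_*\mathcal{N}\neq 0$ and $\mathcal{N}$ always survives on $T$. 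Indeed in case (b) one has explicitly $\mathcal{N}=\pi^*(-r\mathcal{K}_T)$. So contracting the fixed part cannot be the source of the inequality $\mathcal{K}_S^2<\mathcal{K}_T^2$. Note also that numerics alone cannot exclude $\pi=\mathrm{id}$ in case (a), where $\mathcal{K}_T^2<0$ (in contrast with case (b), where $\mathcal{K}_T^2=0\neq\mathcal{K}_S^2$ already forces a genuine contraction). The non-triviality of $\pi$ in case (a) must instead be read off from Harbourne's construction of the morphism in \cite{Har97}; this is what the paper does, recording without further argument that $\pi$ is a non-empty composition of blow-ups, so that the Picard number drops and $\mathcal{K}_S^2<\mathcal{K}_T^2<0$. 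If you replace your contraction claim by that appeal to Harbourne's construction, your induction closes and coincides with the paper's proof.
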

\begin{proof}
Since $S$ does not satisfies the AOP condition, there exists a non-zero nef divisor $F$ on $S$ such that $-K_S.F=0$. Since in an anticanonical rational surface every nef divisor is effective and $S$ is a HH surface, we have that $h^1(S,\mathcal{O}_S(F))=0$. Consider the class $\mathcal{F}$ of $F$ and take $\mathcal{F}=\mathcal{H}+\mathcal{N}$, where $\mathcal{H}$ is the class of the free part of $\mathcal{F}$ and $\mathcal{N}$ the class of the fixed part. Using Theorem \ref{hariii.1} we have that cases (1) and (2) cannot happen, then the only possibility is case (3), i.e. $\mathcal{N}+\mathcal{K}_S$ is an effective class. Consequently, there exists a birational morphism $\pi:S\rightarrow T$ where $T$ is an anticanonical rational surface and by Proposition \ref{contrachh} we have that $T$ is also a HH surface. According to Theorem \ref{hariii.1}, there exist two possible cases:

\textbf{Case (a):} $\mathcal{K}_T^2<0$. In such case, there exists a nef class $\mathcal{F}'$ in $T$ such that $\pi^*(\mathcal{F}'-\mathcal{K}_T)=\mathcal{F}$. Moreover, we have that $h^1(T,\mathcal{O}_T(\mathcal{F}'))=h^1(S,\mathcal{O}_S(\mathcal{F}))=0$, but this is a consequence of the hypothesis $S$ is HH. Using Lemma \ref{casek2neg}, we have that $T$ does not satisfies the AOP condition. In addition, we have that the Picard number of $T$ is smaller than the Picard number of $S$ since $\pi$ is a composition of blow-ups. So, we have that $T$ is a HH anticanonical rational surface, $\mathcal{K}_T^2<0$ and $T$ does not satisfy the AOP condition, that is, we are in the initial setting. Also, note that $\mathcal{K}_S^2<\mathcal{K}_T^2<0$. Thus, we can apply the same argument and since the rank of the Picard group is finite, at some point we have to reach the second possibility of Theorem \ref{hariii.1}.

\textbf{Case (b):} $\mathcal{K}_T^2=0$. In this case, we have the existence of integer numbers $s\geq 0$ and $r>0$ such that $\mathcal{H}=\pi^*(-s\mathcal{K}_T)$ and $\mathcal{N}=\pi^*(-r\mathcal{K}_T)$. Note that since $h^1(S,\mathcal{O}_S(F))=0$ it occurs that $s=0$ and then $\mathcal{F}=\pi^*(-r\mathcal{K}_T)$. In addition, we have that $-r\mathcal{K}_T$ is a non-zero nef class: if $\mathcal{C}$ is the class of an effective divisor, then
\begin{equation*}
    -r\mathcal{K}_T\cdot \mathcal{C}=\pi^*(-r\mathcal{K}_T)\cdot \pi^*(\mathcal{C})
    =\mathcal{F}\cdot \pi^*(\mathcal{C})\geq 0
\end{equation*}
since $\pi^*(\mathcal{C})$ is an effective class and $\mathcal{F}$ is a nef class on $S$. Finally, the condition $\mathcal{K}_T^2=0$ implies that $-r\mathcal{K}_T$ is a non-zero nef divisor that is orthogonal to $-\mathcal{K}_T$ and we conclude the result. 
\end{proof}

As we mention in the introduction, the Harbourne-Hirschowitz conjecture, or Segre-Harbourne-Gimigliano-Hirschowitz conjecture, is an open problem of great interest nowadays. Because of this, the classification of Harbourne-Hirschowitz anticanonical rational surfaces remains open. To the best of our knowledge, the known examples of anticanonical rational surfaces that are Harbourne-Hirschowitz also satisfy the AOP condition. Examples of rational surfaces that satisfy the HH condition but not the AOP condition will be part of a future research project. 
\bigskip

\textbf{Acknowledgements} We would like to thank Mustapha Lahyane for his comments to improve this work. Also, we thank the anonymous referees for their suggestions and remarks to improve the quality of this paper.


\end{document}